\newtheorem{theorem}{Theorem}[section]
\newtheorem{cor}[theorem]{Corollary}
\newtheorem{lem}[theorem]{Lemma}
\newtheorem{prop}[theorem]{Proposition}
\numberwithin{equation}{section}
\newcommand{\BN}{{\Bbb  N}}
\newcommand{\BR}{{\Bbb  R}}
\newcommand{\Lip}{{\rm Lip}}
\newcommand{\lip}{{\rm lip}}
\begin{document}
%
%%%%%%%%%%%%%%%%%%%%%%%%%%%%%%%%%%%%%%%%%%%%%%%%%%%%%%%%%%%%%%%%%%%%%%%%%%
\title[Lattice isomorphisms between certain sublattices of continuous functions]{Lattice isomorphisms between certain sublattices of continuous functions}

\author{Vahid Ehsani and  Fereshteh Sady$^1$}

\subjclass[2010]{Primary 47B38, 46J10, Secondary 47B33}

\keywords{lattice isomrphism, continuous functions, order preserving bijections, Urysohn's property}

\maketitle
\begin{center}

\address{{\em   Department of Pure
Mathematics, Faculty of \\ Mathematical Sciences,
 Tarbiat Modares University,\\ Tehran, 14115-134, Iran}}

\vspace*{.25cm}
  \email{v.ehsani@modares.ac.ir, sady@modares.ac.ir}
\end{center}
\footnote{$^1$ Corresponding author}

\maketitle
%%%%%%%%%%%%%%%%%%%%%%%%%%%%%%%%%%%%%%%%%%%%%%%%%%%%%%%%%%%%%%%%%%%%%%%%%%
%
\begin{abstract}
Let $C(X,I)$ be the lattice of all continuous functions on a
compact Hausdorff space $X$ with values in the unit interval
$I=[0,1]$.  We show that for compact Hausdorff
spaces $X$ and $Y$ and (not necessarily contain constants) sublattices $A$ and $B$ of $C(X,I)$
and $C(Y,I)$, respectively,  which satisfy a certain separation property,  any lattice isomorphism $\varphi : A
\longrightarrow B$ induces a homeomorphism $\mu: Y \longrightarrow X$.
If, furthermore,  $A$ and $B$ are closed under the multiplication, then $\varphi$ has a representation
$\varphi(f)(y)=m_y(f(\mu(y)))$, $f\in A$, for all points $y$ in a dense $G_\delta$ subset $Y_0$ of $Y$, where each $m_y$ is a strictly increasing continuous bijection on $I$. In particular, for the case where $X$ and $Y$ are metric spaces and $A$ and $B$ are the lattices of all Lipschitz functions with values in $I$, the set $Y_0$ is the whole of $Y$.

\end{abstract}

\section{introduction and Preliminaries}
The problem of determining how the different structures of various spaces (algebras) of functions interacts with each other is an important and active area of research.  Some related problems are to investigate the automatic continuity and also the general form of mappings between these spaces preserving some algebraic or topological properties. For instance, by Kaplansky's theorem \cite{Kap1},  the lattice structure of $C_\BR(X)$, the lattice of all real-valued continuous functions on a compact Hausdorff space $X$,  is determined by the topological structure of $X$. More precisely,  if the lattices $C_\BR(X)$ and $C_\BR(Y)$ are isomorphic, then $X$ and $Y$ are homeomorphic.  Moreover, by \cite{Kap2}, if $X$ is metrizable, then such a map $T:C_\BR(X) \longrightarrow C_\BR(Y)$ is continuous with respect to the supremum  norm and has the general form
\[Tf(y) = t(y, f(\tau(y))) \qquad \qquad (f\in C(X), y\in Y),\]
where $\tau: Y\longrightarrow X$ is a homeomorphism and $t : Y\times \BR \longrightarrow \BR$  is a continuous function.
The case that $T: C_\BR(X) \longrightarrow C_\BR(Y)$ is a lattice homomorphism has been studied in \cite{San1}. In this case
$T$ induces a continuous mapping $\tau: Y_T \longrightarrow  X$ , where $Y_T$ is an open subset of $Y$
such that $t(y, f(\tau(y))^-)\le Tf(y) \le t(y, f(\tau(y))^+) $ for all $f\in C(X )$ and $y\in Y_T$.  Moreover, if $T$ is bijective, then
$\tau$ is a homeomorphism between $Y$ and $X$,  and, furthermore,  $T$ has a representation $Tf(y)= t(y, f(\tau(y)))$ for all $f\in C_\BR(X)$ and all $y$ in a dense $G_\delta$ subset of $Y$.

It is easy to see that a bijection $T:C_\BR(X) \longrightarrow C_\BR(Y)$ is a lattice isomorphism if and only if $T$ is order preserving in both directions, that is $f\le g$ if and only if $Tf\le Tg$ for all $f,g\in C_\BR(X)$. However, linear order preserving bijections $T: A(X) \longrightarrow A(Y)$ between subspaces $A(X)$ and $A(Y)$ of $C_\BR(X)$ and $C_\BR(Y)$ have been studied in \cite{Le-Li}. It was shown that under some conditions on the subspaces, such a map is a weighted composition operator inducing a homeomorphism between $X$ and $Y$. Order preserving bijections on the set $C_+(X)$ of all continuous functions on a compact space $X$ with values in $[0,\infty)$  have also been considered in \cite{Mar0}.

Motivated by the Moln\'ar's result \cite{Mol} on sequential
isomorphisms between the sets of von Neumann algebra effects,  Marovt considered  the order preserving bijections and multiplicative bijections on the set  $C(X,I)$ of continuous functions on a compact Hausdorff space $X$ with values in the unit interval $I=[0,1]$ in \cite{Mar2,Mar1}.
Clearly $C(X,I)$ is a  lattice with the usual ordering of functions and a semigroup with pointwise multiplication.  It should be noted that multiplicative bijections on $C(X,I)$ are order preserving in both directions, and so they are lattice isomorphisms, see \cite[Lemma 3]{San2}. By \cite{Mar2} if $X$ is first countable, and $T: C(X,I)\longrightarrow C(X,I)$ is a multiplicative bijection, then  there exist a homeomorphism $\tau$ on $X$ and a continuous map
$k : X \longrightarrow (0,\infty)$, such that $Tf(x) = f(\tau(x))^{k(x)}$ for all $x\in X$ and all
$f\in C(X, I)$. The same problem for arbitrary compact Hausdorff spaces $X$ has been studied in \cite{San2}.
By \cite{Mar1},  if $X$ is first countable, then  for any  bijection $ \varphi : C(X,I) \longrightarrow C(X,I)  $  which preserve the order in both directions there exist a homeomorphism $\mu: X
\longrightarrow X$ and a family $\{m_x\}_{x\in X}$ of increasing
continuous bijections on $I$ such that
\[ \varphi(f)(x)=m_x(f(\mu(x))) \qquad \qquad (f\in C(X,I), x\in X).\]
Marovt conjectured that the results in \cite{Mar2,Mar1} are also valid  without assuming
that $X$ is first countable. The conjecture for multiplicative (and consequently for order preserving) bijections on $C(X,I)$ was disproved by
Ercan and \"{O}nal in \cite{Er-On}. The more general problem of
characterizing those compact Hausdorff spaces $X$ for which any
multiplicative bijection $T: C(X,I) \longrightarrow C(X,I)$ has
the above standard form has been investigated in \cite{Ar}.

In this paper, we assume that $X$ and $Y$ are compact Hausdorff spaces and we study lattice isomorphisms $\varphi:A \longrightarrow B$ between sublattices $A$ and $B$ of $C(X,I)$ and $C(Y,I)$, respectively, not necessarily contain the constants. We show that under a mild separation property on $A$ and $B$, $\varphi$ induces a homeomorphism $\mu: Y\longrightarrow X$. Then, under the assumption that $A$ and $B$ are closed under multiplication we give a description of $\varphi$ on a dense $G_\delta$ subset of $Y$ and investigate continuity of $\varphi$ with respect to the uniform convergence topology.

Let us fix some notations. For a compact Hausdorff space  $X$,  and $f,g \in C(X,I)$,  the notations $f \vee g$ and  $f \wedge g $ stand for $\max(f,g)$ and  $\min(f,g)$, respectively. For compact Hausdorff spaces $X$ and $Y$, a map  $ \varphi: A \longrightarrow B $ between sublattices $A$ and $B$ of $C(X,I)$ and $C(Y,I)$, respectively, is a lattice homomorphism  if $\varphi (f \wedge g) = \varphi(f) \wedge \varphi(g)$ and $\varphi(f \vee g) = \varphi(f) \vee \varphi(g)$ for all $f,g\in A$. A bijective homomorphism is  called a lattice isomorphism.

For $f\in C(X,I)$, $z(f)$ denotes the zero set of $f$ and ${\rm coz}(f)$ is its cozero set, that is ${\rm coz}(f)=X\backslash z(f)$.

For a compact Hausdorff space $X$, we say that a subset $A$ of $C(X,I)$ has {\em Urysohn's property}, if for any pair of disjoint closed subsets $F$ and $G$ of $X$ there exists $f\in A$ such that $f=0$ on $F$ and $f=1$ on $G$ (compare with the Property 1 in \cite{Ar}).  We also say that {\em the evaluation of $A$ on $X$ is dense in} (respectively, equal to) $I$ if for each $x\in X$, the set $A_x=\{f(x) : f \in A\}$ is dense in (respectively, equal to) $I$. It is obvious that if $A$ contains the constant functions, then the evaluation of  $A$ on $X$ is equal to $I$.

For a compact metric space  $(X,d)$, the sublattice $\Lip(X,I)$ of $C(X,I)$ consisting of all $I$-valued Lipschitz functions on $X$ has Urysohn's property. Indeed, for disjoint closed subsets $F$ and $G$ of $X$, the function $f\in \Lip(X,I)$ defined by $f(x)=\min(\frac{d(x,F)}{d(F,G)},1)$ satisfies the desired property. More generally, for each $\alpha\in (0,1]$, the set $\Lip_\alpha(X,I)$ of all functions $f: X \longrightarrow I$ satisfying the Lipschitz condition of order $\alpha$ is a sublattice of $C(X,I)$ which has Urysohn's property. Similarly for $\alpha\in (0,1)$, the sublattice $\lip_\alpha(X,I)$ of $\Lip_\alpha(X,I)$ consisting of all functions $f\in \Lip_\alpha(X,I)$ with $\lim \frac{f(x)-f(y)}{d^\alpha(x,y)}=0$ as $d(x,y)\to 0$, also has this property.
For another example of such sublattices we can refer to the lattice ${\rm AC}(X,I)$ of all $I$-valued absolutely continuous functions on a compact subset $X$ of the real line.  It should be noted that the above mentioned  sublattices of $C(X,I)$  are all closed under multiplication.

\section{Main Results}
Our main result is as follows.

\begin{theorem}\label{t2}
 Let $X$ and $Y$ be compact Hausdorff spaces, $A$ and $B$ be sublattices of $C(X,I)$ and $C(Y,I)$, respectively having Urysohn's
 property and  $\varphi:A \longrightarrow B$ be a lattice isomorphism. Then $\varphi$ induces a homeomorphism $\mu$ from $Y$ onto $X$. If, furthermore,

 {\rm (i)} $A$ and $B$ are closed under multiplication, and

 {\rm (ii)}  the evaluations of $A$ and $B$ are dense in $I$,

 \noindent
then there exists a dense $G_\delta$ subset $Y_0$ of $Y$ and a family  $\{m_y\}_{y\in Y_0}$ of  strictly increasing  continuous bijection on $I$ such that
 \[\varphi(f)(y) = m_{y}( f(\mu(y)) )\, \qquad \qquad (f \in A, y\in Y_0).\]
 \end{theorem}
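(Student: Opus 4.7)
The plan is to first construct the homeomorphism $\mu:Y\to X$ from the lattice data alone, and then, under hypotheses (i) and (ii), extract $\{m_y\}$ by transferring values along $\mu$. For $\mu$: attach to each $y\in Y$ the lattice ideal
\[
\widetilde{M}_y := \{f\in A : \varphi(f)(y)=0\} = \varphi^{-1}\bigl(\{g\in B : g(y)=0\}\bigr),
\]
and set $\Gamma_y := \bigcap_{f\in\widetilde{M}_y} z(f)\subseteq X$. Urysohn's property of $A$ applied to $(F,G)=(X,\emptyset)$ and $(\emptyset,X)$ gives $0,1\in A$, whence $\varphi(0)=0$ and $\varphi(1)=1$. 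I will prove $|\Gamma_y|=1$ and set $\mu(y)$ to be its unique point. Uniqueness: if distinct $x_1,x_2\in\Gamma_y$, use Urysohn on $X$ to produce $f_1,f_2\in A$ with $f_i(x_i)=1$ and supported in disjoint closed neighbourhoods of $x_1,x_2$; then $f_1\wedge f_2=0$ in $A$, so $\varphi(f_1)\wedge\varphi(f_2)=0$, hence some $\varphi(f_i)(y)=0$, forcing $f_i(x_i)=0$, a contradiction. Nonemptiness reduces by compactness of $X$ and the finite intersection property to the claim that $\varphi(f)(y)=0$ implies $z(f)\neq\emptyset$; I expect this preservation of ``everywhere positive'' functions to be the first main obstacle, and the plan is to handle it by applying the symmetric argument with $\varphi^{-1}$ and exploiting Urysohn's property on both $A$ and $B$.

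Granted $\mu$, the defining equality $\widetilde{M}_y=M_{\mu(y)}$ yields, for every $f\in A$,
\[
\mu^{-1}(\operatorname{coz}(f))=\operatorname{coz}(\varphi(f)),
\]
and, since $\{\operatorname{coz}(f):f\in A\}$ is a base for the topology of $X$ by Urysohn's property, this forces $\mu$ to be continuous. The analogous construction applied to $\varphi^{-1}$ produces a continuous inverse $\mu':X\to Y$, so $\mu$ is a homeomorphism.

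Under hypotheses (i) and (ii), set
\[
Y_0 := \{\,y\in Y : \varphi(f)(y)=\varphi(g)(y)\text{ whenever }f(\mu(y))=g(\mu(y)),\ f,g\in A\,\},
\]
and for $y\in Y_0$ define $m_y$ on $E_y:=\{f(\mu(y)):f\in A\}$ by $m_y(f(\mu(y))):=\varphi(f)(y)$. The set $E_y$ is dense in $I$ by (ii) and contains $0,1$. Monotonicity of $m_y$ on $E_y$ follows by replacing $(f_1,f_2)$ with $(f_1\wedge f_2,f_2)$ and invoking order preservation of $\varphi$. Multiplicative closure (i) provides powers $f^n\in A$ realizing $t^n$ whenever $f(\mu(y))=t$, so each value of $m_y$ is surrounded by an accumulating sequence; combined with $m_y(0)=0$, $m_y(1)=1$, and density of both $E_y$ and $m_y(E_y)=\{\varphi(f)(y):f\in A\}$ in $I$, the monotone extension $\widetilde m_y:I\to I$ is shown to be a strictly increasing continuous bijection.

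To see $Y_0$ is dense $G_\delta$, I express
\[
Y\setminus Y_0 = \bigcup_{q\in\mathbb{Q}_{>0}}\bigcup_{(f,g)} C_{f,g,q},\qquad C_{f,g,q}:=\{y: f(\mu(y))=g(\mu(y)),\ \varphi(f)(y)-\varphi(g)(y)\geq q\},
\]
each $C_{f,g,q}$ being closed in $Y$. Using multiplicative and lattice operations to generate a countable cofinal subfamily of $A$, the inner union reduces to a countable one. Each $C_{f,g,q}$ is then shown to be nowhere dense: if an open $W\subseteq Y$ were contained in one, then $f=g$ on $\mu(W)$ but $\varphi(f)-\varphi(g)\geq q$ on $W$; multiplying by an Urysohn function $h\in A$ with $h=1$ on a closed set inside $\mu(W)$ and $h=0$ outside $\mu(W)$ gives $fh=gh$ in $A$, hence $\varphi(fh)=\varphi(gh)$, and a comparison with $\varphi(fh)\leq\varphi(f)\wedge\varphi(h)$ and $\varphi(gh)\leq\varphi(g)\wedge\varphi(h)$ on $W$ produces the contradiction. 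Baire's theorem then delivers $Y_0$ as a dense $G_\delta$. The principal technical obstacles will be the preservation of everywhere positive functions in the first paragraph and the countable reduction together with the nowhere-dense argument in the final paragraph.
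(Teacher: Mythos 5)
There is a genuine gap at the very first step, and it propagates. Your construction of $\mu$ rests on the identity $\widetilde{M}_y=\{f\in A:\varphi(f)(y)=0\}=\{f\in A: f(\mu(y))=0\}$, i.e.\ on the claim that the lattice isomorphism transfers the \emph{pointwise} vanishing of functions at every single $y\in Y$. Neither inclusion is established. For nonemptiness of $\Gamma_y$ you need ``$\varphi(f)(y)=0\Rightarrow z(f)\neq\emptyset$''; the symmetric argument with $\varphi^{-1}$ that you propose only shows that $z(\varphi(f))$ cannot contain a nonempty open set when $z(f)=\emptyset$ (pick a nonzero $g\in B$ supported in that open set, note $g\wedge\varphi(f)=0$, hence $\varphi^{-1}(g)\wedge f=0$, hence $\varphi^{-1}(g)=0$). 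It does not exclude $z(\varphi(f))$ being a nonempty nowhere dense set, e.g.\ a single non-isolated point, because then no nonzero member of $B$ is supported inside it and the argument has nothing to work with. The converse inclusion, $f(\mu(y))=0\Rightarrow\varphi(f)(y)=0$, which you need for $\mu^{-1}(\operatorname{coz}(f))=\operatorname{coz}(\varphi(f))$ and hence for continuity of $\mu$, is even more problematic: it is exactly the statement $m_y(0)=0$ read at \emph{every} $y\in Y$, whereas the theorem (for good reason, cf.\ the Ercan--\"Onal counterexample) only asserts the representation on a dense $G_\delta$. The paper circumvents all of this by replacing ``$f$ vanishes at $x$'' with ``$f$ vanishes on a neighborhood of $x$'' (the sets $\mathcal{U}_x$): vanishing on a nonempty open set \emph{is} a lattice-detectable property (via disjointness $f\wedge g=0$), and the resulting sets $C_x$ can be shown nonempty by the finite intersection property without ever making a pointwise claim. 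Note also that in the first half of the theorem multiplicativity is not assumed, so you cannot lean on locality-type lemmas there.

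The second half has further problems. Your $Y_0$ is only the set where $m_y$ is well defined; you omit the dual condition ($f(\mu(y))\neq g(\mu(y))\Rightarrow\varphi(f)(y)\neq\varphi(g)(y)$), without which $m_y$ is merely nondecreasing, not strictly increasing, and its monotone extension need not be a continuous bijection of $I$. The paper takes $Y_0=I_\varphi\cap\mu^{-1}(I_{\varphi^{-1}})$ precisely to secure both directions, and obtains the second one cheaply by running the Baire argument for $\varphi^{-1}$ and pulling back along $\mu$. Finally, the reduction of your union over all pairs $(f,g)$ to a countable union is only gestured at (``a countable cofinal subfamily of $A$''); the paper's actual device is to fix, for a countable dense set of pairs $p<q$ in $I$, functions $f_{p,q}\in A$ with $p<f_{p,q}<q$ (its Lemma 2.8) and to sandwich arbitrary $f,g$ between them using the order-transfer Lemma 2.7. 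Your nowhere-density argument for $C_{f,g,q}$ is essentially sound once the locality lemma (paper's Lemma 2.6) is available, but as written it stops short of the needed lower bound $\varphi(fh)=\varphi(f)$ near points where $h=1$ near $\mu(y)$.
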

 We prove the theorem through the following lemmas.

In what follows we assume that $X$ and $Y$ are compact Hausdorff
spaces, $A$ and $B$ are sublattices of $C(X,I)$ and $C(Y,I)$,
respectively which have Urysohn's property, and $\varphi:
A\longrightarrow B$ is a lattice isomorphism. Then
$\varphi$ is order preserving in both
directions, that is  $f\le g$ if and only if $\varphi(f) \le \varphi(g)$ for
all $f,g\in A$.

Since $X$ is compact, it follows from the hypotheses that $A$ contains
the constant functions $0$ and $1$ and, furthermore, $\varphi(0)=0$ and
$\varphi(1)=1$.

We note that for any pair of functions $f,g\in A$ we have $fg=0$
if and only if $f \wedge g=0$. Hence we get the next lemma.
\begin{lem}\label{l1}
For $f,g\in A$, we have $fg=0$ if and only if
$\varphi(f)\varphi(g)=0$.
\end{lem}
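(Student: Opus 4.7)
The plan is to chain together three easy equivalences, using the observation made immediately before the lemma together with the two structural facts already established about $\varphi$ (namely, that $\varphi$ is a bijection with $\varphi(0)=0$, and that $\varphi$ is a lattice homomorphism).

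First I would record the pointwise observation that is invoked in the text: for any $h,k\in C(Z,I)$ on a compact Hausdorff space $Z$, the product $hk$ vanishes identically if and only if $h\wedge k$ vanishes identically. This holds because $h,k\ge 0$, so at each point $z$ we have $h(z)k(z)=0$ exactly when $h(z)=0$ or $k(z)=0$, which is the same as $\min(h(z),k(z))=0$. I would apply this observation twice: once to the pair $f,g\in A\subseteq C(X,I)$, giving $fg=0\Leftrightarrow f\wedge g=0$, and once to the pair $\varphi(f),\varphi(g)\in B\subseteq C(Y,I)$, giving $\varphi(f)\varphi(g)=0\Leftrightarrow \varphi(f)\wedge\varphi(g)=0$.

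Next I would use that $\varphi$ is a lattice homomorphism to rewrite $\varphi(f)\wedge\varphi(g)=\varphi(f\wedge g)$, and finally invoke that $\varphi$ is a bijection with $\varphi(0)=0$ to conclude $\varphi(f\wedge g)=0\Leftrightarrow f\wedge g=0$. Stringing these equivalences together yields
\[
fg=0\ \Longleftrightarrow\ f\wedge g=0\ \Longleftrightarrow\ \varphi(f\wedge g)=0\ \Longleftrightarrow\ \varphi(f)\wedge\varphi(g)=0\ \Longleftrightarrow\ \varphi(f)\varphi(g)=0,
\]
which is exactly the claim.

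There is essentially no obstacle here; the lemma is a formal consequence of facts that the paper has already isolated in the paragraphs preceding it. The only thing worth being careful about is that $\varphi(0)=0$ really is established (it is, via the remark that $\varphi$ must send the constants $0$ and $1$ to themselves), so that the step $\varphi(f\wedge g)=0\Leftrightarrow f\wedge g=0$ is legitimate as a biconditional and not merely one implication.
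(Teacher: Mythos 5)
Your proposal is correct and matches the paper's (implicit) argument exactly: the paper records the observation that $fg=0$ iff $f\wedge g=0$ for nonnegative functions and then derives the lemma from $\varphi$ being a lattice isomorphism with $\varphi(0)=0$, which is precisely your chain of equivalences. No gaps.
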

Using  Urysohn's property, for any pair  $U_1$ and $U_2$
of open subsets of $X$ with disjoint closures we can find $f_1, f_2\in A$ such
that $f_1=0$ on $U_1$, $f_2=0$ on $U_{2}$ and $f_1 \vee
f_2=1$.

For each point $x \in X$, let $\mathcal{U}_{x}$ denote
the set of all functions $f\in A$ vanishing on a neighborhood of
$x$. We  put
\begin{equation*}
C_{x}=\lbrace y\in Y: \varphi(f)(y)=0\; {\rm  for\; all}\;f \in
\mathcal{U}_{x}\rbrace .
\end{equation*}
Similar notations $\mathcal{U}_{y}$ and $C_{y}$ will be used for
each $y\in Y$.
\begin{lem}\label{l2}
For each $x\in X$, $C_x$ is nonempty.
\end{lem}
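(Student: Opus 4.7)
The plan is to argue by contradiction. Suppose $C_x = \emptyset$; we will produce a function that contradicts the injectivity of $\varphi$ via Urysohn's property.

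First, I unpack the failure of $C_x = \emptyset$. For every $y \in Y$ there exists $f_y \in \mathcal{U}_x$ with $\varphi(f_y)(y) > 0$; by continuity of $\varphi(f_y)$ on $Y$, this positivity persists on an open neighborhood $V_y$ of $y$. By compactness of $Y$, finitely many such neighborhoods $V_{y_1}, \ldots, V_{y_n}$ cover $Y$. Each $f_{y_i}$ vanishes on some open neighborhood $W_{y_i}$ of $x$; set $W = \bigcap_{i=1}^{n} W_{y_i}$, which is still an open neighborhood of $x$ since the intersection is finite. Form $f = f_{y_1} \vee \cdots \vee f_{y_n} \in A$. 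Then $f$ vanishes on $W$, so $f \in \mathcal{U}_x$, while
\[
\varphi(f) \;=\; \varphi(f_{y_1}) \vee \cdots \vee \varphi(f_{y_n})
\]
is strictly positive at every point of $Y$ (any $y \in Y$ lies in some $V_{y_i}$).

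Next, I invoke Urysohn's property of $A$ on the disjoint closed sets $\{x\}$ and $X\setminus W$ of $X$ to obtain $h \in A$ with $h(x) = 1$ and $h \equiv 0$ on $X\setminus W$. Since $f \equiv 0$ on $W$ and $h \equiv 0$ on $X \setminus W$, we have $f \wedge h = 0$ pointwise on $X$. Because $\varphi$ is a lattice homomorphism with $\varphi(0)=0$, this yields
\[
\varphi(f) \wedge \varphi(h) \;=\; \varphi(f\wedge h) \;=\; \varphi(0) \;=\; 0
\]
on $Y$. Combined with $\varphi(f) > 0$ everywhere, this forces $\varphi(h) \equiv 0 = \varphi(0)$, and the injectivity of $\varphi$ gives $h = 0$, contradicting $h(x)=1$.

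The main technical nuance is ensuring that the function extracted from the finite cover still belongs to $\mathcal{U}_x$; this is why one must take the intersection $W$ of the finitely many neighborhoods $W_{y_i}$ before applying Urysohn's property, rather than trying to localize separately at each $y_i$. Once the right $W$ is identified, the remainder of the argument is a direct use of the lattice-homomorphism hypothesis (which could equivalently be routed through Lemma~\ref{l1} by replacing $\wedge$ with multiplication).
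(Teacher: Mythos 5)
Your proof is correct and is essentially the paper's argument: the paper reduces to finite subfamilies via the finite intersection property for the closed sets $\varphi(f)^{-1}(\{0\})$, which is exactly your compactness extraction of a finite subcover, and both then take the finite supremum $f$, note it vanishes on a neighborhood $W$ of $x$ while $\varphi(f)$ is nowhere zero, and use Urysohn's property to produce a nonzero $h$ killed by $\varphi$, contradicting injectivity. The only cosmetic difference is that you work with $\wedge$ where the paper works with the product $f_0g$, which you correctly note is equivalent here.
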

\begin{proof}
First we note that $ C_{x}= \bigcap_{f\in \mathcal{U}_{x} }
\varphi (f)^{-1} (\{0 \})$. Hence $C_x$  is a closed subset of
$Y$. Using finite intersection property, it suffices to show that
for each $f_1,..., f_n\in \mathcal{U}_x$, the intersection
$\cap_{i=1}^{n} \varphi (f_i)^{-1} (\{0 \})$ is nonempty.  For
this, assume that $ f_{1}, f_{2}, \cdots , f_{n} \in
\mathcal{U}_{x}$. Since $\varphi$ is a lattice isomorphism, we have $ \bigcap _{i=1} ^{n}\varphi (f_{i})^{-1}
(\lbrace 0 \rbrace)= \lbrace y\in Y: \varphi (\bigvee_{i=1}
^{n}f_{i})(y)=0 \rbrace $. Put $f_{0}= \bigvee_{i=1} ^{n}f_{i} $
and assume on the contrary that $\varphi (f_0)(y) \neq 0$ for all
$y\in Y$. Then clearly $ f_{0}  \neq 0$, that is $ z(f_{0})=
\lbrace x\in X: f_{0}(x)=0 \rbrace \neq X $. Since for each  $i\in
\{1, 2,..., n \}$, $ f_{i}=0$ on a neighborhood $U_i$ of $x$, it
follows that $f_0=0$ on the neighborhood $U=\cap_{i=1}^{n} U_i$
of $x$, i.e. $U\subseteq z(f_0)$. In particular, $\overline{U}\neq
X$.  Using  Urysohn's property, we can easily find a nonzero function
$ g\in A $ such that $ {\rm supp}(g) \subseteq U$. Then $ f_{0}
g=0$, and so $ \varphi(f_{0}) \varphi(g)= 0 $. Since
$z(\varphi(f_0))=\emptyset$ we get $z(\varphi (g))= Y $. Thus $
\varphi(g)=0$ which is impossible,  since $\varphi(0)=0$
and $\varphi$ is a bijection. Hence $C_{x} \neq \emptyset $.
\end{proof}
\begin{lem}\label{l3}
For each $x\in X$ the  inclusion $\varphi
(\mathcal{U}_{x})\subseteq \mathcal{U}_{y}$ holds for all $y\in
C_{x}$.
\end{lem}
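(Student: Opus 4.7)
\medskip
\noindent\textbf{Proof plan.} Fix $x\in X$, $y\in C_x$, and $f\in \mathcal{U}_x$; then $f$ vanishes on some open neighborhood $U$ of $x$, and the task is to produce an open neighborhood $V$ of $y$ on which $\varphi(f)=0$. My plan is to construct an auxiliary function $f_2\in A$ that is pointwise disjoint from $f$ (so that $f\cdot f_2=0$ on $X$) and whose image $\varphi(f_2)$ does not vanish at $y$. Lemma \ref{l1} will then yield $\varphi(f)\,\varphi(f_2)=0$ on $Y$, and continuity of $\varphi(f_2)$ at $y$ will give the desired neighborhood of $y$ on which $\varphi(f)$ is forced to be zero.

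To produce $f_2$ I would invoke the paired Urysohn property noted just before Lemma \ref{l2}. Using normality of $X$, first shrink $U$ twice to obtain open sets $V_1,V_2$ with $x\in V_1\subseteq \overline{V_1}\subseteq V_2\subseteq \overline{V_2}\subseteq U$. The open sets $V_1$ and $X\setminus \overline{V_2}$ have disjoint closures, so the paired Urysohn property supplies $f_1,f_2\in A$ satisfying $f_1=0$ on $V_1$, $f_2=0$ on $X\setminus \overline{V_2}$, and $f_1\vee f_2=1$.

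With these in hand, two observations close the argument. First, because $f_2$ vanishes on $X\setminus \overline{V_2}\supseteq X\setminus U$ while $f$ vanishes on $U$, the product $f\cdot f_2$ is identically zero on $X$. Second, $f_1$ belongs to $\mathcal{U}_x$ since it vanishes on the neighborhood $V_1$ of $x$, so $y\in C_x$ yields $\varphi(f_1)(y)=0$; applying $\varphi$ to $f_1\vee f_2=1$ and using $\varphi(1)=1$ then forces $\varphi(f_2)(y)=1$. Combining these with Lemma \ref{l1} gives $\varphi(f)\,\varphi(f_2)=0$ on $Y$, and since $\varphi(f_2)$ is continuous and positive at $y$, it stays positive on some open neighborhood $V$ of $y$; hence $\varphi(f)=0$ on $V$, so $\varphi(f)\in \mathcal{U}_y$.

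The main obstacle, as I see it, lies in bridging the gap between the pointwise statement $\varphi(f)(y)=0$ that is free from $y\in C_x$ and the neighborhood-level conclusion $\varphi(f)\in \mathcal{U}_y$ we actually need. Closing this gap requires manufacturing a ``certificate'' function $f_2$ that is disjoint from $f$ yet whose $\varphi$-image is nonzero at $y$; the paired form of Urysohn's property, together with $\varphi(0)=0$, $\varphi(1)=1$, and Lemma \ref{l1}, is precisely what produces such a certificate without appealing to any multiplicative structure on $A$ or $B$ or any prior knowledge about the candidate homeomorphism $\mu$.
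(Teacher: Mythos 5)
Your proof is correct and takes essentially the same route as the paper's: both use Urysohn's property to manufacture a pair $f_1,f_2\in A$ with $f_1\vee f_2=1$, one vanishing near $x$ (so its $\varphi$-image vanishes at $y$ because $y\in C_x$) and the other supported inside $U$ (hence annihilating $f$), then conclude via Lemma \ref{l1} that $\varphi(f)$ vanishes on the cozero neighborhood of $y$ given by the surviving function. The only differences are cosmetic — your indices are swapped relative to the paper's, and you cite the ``paired'' Urysohn remark where the paper applies Urysohn's property twice directly.
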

\begin{proof}
Let $y\in C_x$ and let $f\in \mathcal{U} _{x}$ be nonzero. Then there exists a neighborhood $U$
of $ x $ such that $ f=0 $ on $U$.  Choose open neighborhoods
$U_{1}$ and $U_{2}$ of $x$ such that $\overline{U_{2}} \subseteq
U_{1}\subseteq \overline{U_{1}} \subseteq U$. Then there are
functions $ f_{1}, f_{2} \in A $ such that $ f_{1}=1$ on $ U_{1} $
and $ {\rm supp} (f_{1})\subseteq U $ and similarly $ f_{2} = 0$
on $\overline{U_{2}} $ and  $ f_{2}=1 $ on $ X \backslash U_{1} $.
Hence  $ f_{1} \vee f_{2}= 1$ and consequently $ \varphi
(f_{1}) \vee \varphi (f_{2})= 1$. We note that $ \varphi
(f_{2})(y)=0 $, since $ y\in C_{x} $. Thus $ \varphi (f_{1})(y)=1
$. On the other hand $ f_{1}f=0 $ and it follows from Lemma
\ref{l1} that $\varphi(f_1) \varphi(f)=0$. Hence the open neighborhood  ${\rm coz}(\varphi(f_{1}))$
of $y$ is contained in  $z(\varphi(f)) $, as desired.
\end{proof}

\begin{lem}\label{l4}
For each $x\in X$ the set $C_x$ is a singleton. A similar
assertion holds for all $y\in Y$.
\end{lem}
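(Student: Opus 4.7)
The plan is to exploit the symmetry between $\varphi$ and $\varphi^{-1}:B\longrightarrow A$. Since $\varphi^{-1}$ is again a lattice isomorphism between sublattices with Urysohn's property, applying Lemmas \ref{l2} and \ref{l3} to it gives the corresponding statements on the $Y$-side: for every $y\in Y$ the set $C_y=\{x\in X:\varphi^{-1}(g)(x)=0 \text{ for all } g\in\mathcal{U}_y\}$ is nonempty, and $\varphi^{-1}(\mathcal{U}_y)\subseteq\mathcal{U}_x$ whenever $x\in C_y$.

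The crucial intermediate step, which I expect to be the main obstacle, is to show that $y\in C_x$ forces $C_y=\{x\}$. To prove it, take any $x'\in C_y$ and suppose $x'\neq x$. Normality of $X$ together with Urysohn's property of $A$ produces $h\in A$ with $h\in\mathcal{U}_x$ and $h(x')=1$: separate $x$ and $x'$ by open neighborhoods with disjoint closures and apply the property to those closures. Lemma \ref{l3} for $\varphi$ (using $y\in C_x$) yields $\varphi(h)\in\mathcal{U}_y$, and Lemma \ref{l3} for $\varphi^{-1}$ (using $x'\in C_y$) then forces $h=\varphi^{-1}(\varphi(h))\in\mathcal{U}_{x'}$, so $h$ vanishes on a neighborhood of $x'$. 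This contradicts $h(x')=1$, and so $x'=x$, proving $C_y=\{x\}$.

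With this symmetry, the lemma itself is short. Assume $y_1,y_2\in C_x$ with $y_1\neq y_2$. Normality of $Y$ together with Urysohn's property of $B$ produces $g\in B$ with $g\in\mathcal{U}_{y_1}$ and $g(y_2)=1$ (separate $y_1,y_2$ by open sets with disjoint closures, then apply the property to those closures). Set $f=\varphi^{-1}(g)$. Since $y_1\in C_x$, the intermediate claim gives $C_{y_1}=\{x\}$; in particular $x\in C_{y_1}$, and the symmetric form of Lemma \ref{l3} gives $f=\varphi^{-1}(g)\in\mathcal{U}_x$. But $y_2\in C_x$ then forces $g(y_2)=\varphi(f)(y_2)=0$, contradicting $g(y_2)=1$. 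Hence $C_x$ is a singleton; the analogous statement for each $C_y$ is obtained by running the same argument with $\varphi^{-1}$ in place of $\varphi$.
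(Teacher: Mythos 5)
Your proof is correct and follows essentially the same route as the paper: the key step in both is the intermediate claim that $y\in C_x$ forces $C_y=\{x\}$, proved by combining Lemma \ref{l3} for $\varphi$ and for $\varphi^{-1}$ with a Urysohn separation of $x$ from a putative second point (your explicit $h$ is exactly the paper's appeal to ``$\mathcal{U}_x\subseteq\mathcal{U}_z$ implies $x=z$''). The only difference is cosmetic: the paper finishes by applying the intermediate claim and its symmetric version twice in succession, whereas you finish by separating two putative points of $C_x$ with a second Urysohn function on $Y$; both endgames are valid.
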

\begin{proof}
 We, first, show that for each $y\in C_x$ we have $C_y=\lbrace x \rbrace$. Let $y\in C_x$. Using Lemma \ref{l2} for $\varphi^{-1}$ instead of $\varphi$, we conclude that $C_y$ is nonempty.
  Hence we need only to show that for any point $z\in X$ distinct from $x$ we have  $z\notin C_y$. Assume on the contrary that $z\neq x$ and $z\in
  C_{y}$. Since $y\in C_{x}$, it follows from the above lemma  that
  $\varphi(\mathcal{U}_{x})\subseteq \mathcal{U} _{y}$. Using  the same lemma for $\varphi^{-1}$,  we have
  $\varphi^{-1} (\mathcal{U}_{y})\subseteq \mathcal{U}_{z}$. Hence $ \mathcal{U}_{x} \subseteq \mathcal{U}_{z}$
  which concludes, by  Urysohn's property, that $x=z$, a contradiction.

Now for an arbitrary point $y\in Y$, since $C_{y}$ is nonempty, we
can choose a point $z\in C_{y}$. Then similar to the above
argument we have  $C_{z}=\{y\}$, that is $y\in C_z$. Hence, using
the above argument once again we have $C_y=\{z\}$, i.e. $C_y$ is a
singleton.

Similarly for each $x\in X$, $C_x$ is a singleton.
\end{proof}
By the above lemma  we can define a map $\mu: Y \longrightarrow
X$ which associates to each $y\in Y$ the unique point $x\in C_y$.
\begin{lem}\label{l5}
The map $\mu:Y \longrightarrow X$ is a homeomorphism.
\end{lem}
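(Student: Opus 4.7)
The plan is as follows. First, applying Lemmas \ref{l2}--\ref{l4} verbatim to the lattice isomorphism $\varphi^{-1}:B\longrightarrow A$ yields by the same construction a map $\nu:X\longrightarrow Y$ sending each $x\in X$ to the unique point of $C_x$. The proof of Lemma \ref{l4} shows that $y\in C_x$ if and only if $x\in C_y$, so $\nu$ is the two-sided inverse of $\mu$. Consequently $\mu$ is a bijection between compact Hausdorff spaces, and it suffices to prove continuity, since a continuous bijection between compact Hausdorff spaces is automatically a homeomorphism.

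To establish continuity at $y_0\in Y$, I would let $x_0=\mu(y_0)$ and fix an arbitrary open neighborhood $U$ of $x_0$. By normality of $X$, choose an open set $W$ with $x_0\in W\subseteq \overline{W}\subseteq U$. Urysohn's property, applied to the disjoint closed sets $\overline{W}$ and $X\setminus U$, then produces $g\in A$ with $g=0$ on $\overline{W}$ and $g=1$ on $X\setminus U$. In particular $g\in\mathcal{U}_{x_0}$, and since $y_0\in C_{x_0}$ Lemma \ref{l3} gives $\varphi(g)\in\mathcal{U}_{y_0}$; hence $\varphi(g)$ vanishes on some open neighborhood $V$ of $y_0$.

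The final step is to check that $\mu(V)\subseteq U$. Given $y\in V$, set $x=\mu(y)$. Since $\varphi(g)\equiv 0$ on $V$, a neighborhood of $y$, we have $\varphi(g)\in\mathcal{U}_y$. Applying Lemma \ref{l3} this time to $\varphi^{-1}$ (using that $x\in C_y$) yields $g=\varphi^{-1}(\varphi(g))\in\mathcal{U}_x$; in particular $g(x)=0$, and since $g\equiv 1$ on $X\setminus U$ this forces $x\in U$, as required.

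The main obstacle is not computational but conceptual: Lemma \ref{l3} must be invoked twice and in opposite directions, once to push a neighborhood of $x_0$ forward under $\varphi$ and once to pull a neighborhood of $y$ backward under $\varphi^{-1}$. This symmetric usage is legitimate precisely because all the hypotheses of the theorem -- Urysohn's property on both $A$ and $B$, the definitions of $\mathcal{U}_x$ and $C_x$, and the fact that $\varphi$ preserves order in both directions -- are invariant under the interchange of $(X,A)$ with $(Y,B)$ and of $\varphi$ with $\varphi^{-1}$.
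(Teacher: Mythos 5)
Your proof is correct, and it reaches continuity by a different route than the paper. The paper argues by contradiction with nets: it takes $y_\alpha\to y_0$ with $\mu(y_\alpha)$ staying outside $\overline{U}$, builds a pair $f_1,f_2\in A$ with $f_1\vee f_2=1$, uses the lattice identity $\varphi(f_1)\vee\varphi(f_2)=1$ together with $\varphi(f_2)(y_0)=0$ to force $\varphi(f_1)(y_0)=1$, and then contradicts $\varphi(f_1)(y_\alpha)=0$ via continuity of $\varphi(f_1)$. You instead give a direct $\mu(V)\subseteq U$ argument using a single Urysohn function $g$ and Lemma \ref{l3} applied twice in opposite directions: once to push $g\in\mathcal{U}_{x_0}$ forward to get $\varphi(g)\in\mathcal{U}_{y_0}$, and once (for $\varphi^{-1}$) to pull $\varphi(g)\in\mathcal{U}_y$ back to $g\in\mathcal{U}_{\mu(y)}$, so that $g(\mu(y))=0$ forces $\mu(y)\in U$. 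Your version needs only ``vanishing'' information and never has to establish that some image function attains the value $1$ at $y_0$, which is the step where the paper must invoke the preservation of $\vee$ and the identity $\varphi(1)=1$; in exchange, you rely on the forward direction of Lemma \ref{l3} for $\varphi^{-1}$, which the paper's proof of the continuity step does not formally need (it uses only the definition of $C_x$). Both reductions of bijectivity to the symmetry $C_x=\{y\}\iff C_y=\{x\}$ coincide, and your appeal to the fact that a continuous bijection from a compact space to a Hausdorff space is a homeomorphism is the same (implicit) closing step as in the paper. All hypotheses you invoke are available, so the argument stands as written.
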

\begin{proof}
As it was noted in the proof of Lemma \ref{l4}, for each $y\in Y$ and $x\in X$, we have $C_x=\{y\}$ if and only if $C_y=\{x\}$. This implies that  $\mu$ is a bijective map.
To show that $\mu$ is continuous, assume on the contrary that there exists a net $\{y_\alpha\}$ in 
$Y$ converging to a point $y_0\in Y$ and $\{\mu(y_\alpha)\}$ does not converge to $\mu(y_0)$. Then passing through a subnet we may assume that there exists an open neighbourhood $U$ of $\mu(y_0)$ such that $\mu(y_\alpha)\in X \backslash \overline{U}$ for all $\alpha$. Choose open neighborhoods $U_0$, $U_1$ and $U_2$ of $\mu(y_0)$ such that $\overline{U_i}\subseteq U_{i+1}$ for $i=0,1$ and $\overline{U_2}\subseteq U$. Then there are functions $f_1,f_2\in A$ such that $f_1=1$ on $U_1$ and $f_1=0$ on $X\backslash U_2$, and similarly $f_2=0$ on $U_0$ and $f_2=1$ on $X\backslash U_1$.  Then clearly $f_1\vee f_2=1$ and consequently $\varphi(f_1) \vee \varphi(f_2)=1$. Since $f_2=0$ on the neighborhood $U_0$ of $\mu(y_0)$ it follows that $\varphi(f_2)(y_0)=0$. Hence $\varphi(f_1)(y_0)=1$. On the other hand $f_1=0$ on the open subset $X\backslash \overline{U}$ of $X$ (containing all $\mu(y_\alpha)$) which implies that $\varphi(f_1)(y_\alpha)=0$ for all $\alpha$, a contradiction. 
\end{proof}
The above lemma completes the proof of the first part of Theorem \ref{t2}.

We should note that since $\varphi^{-1}$ is also a lattice isomorphism it induces a homeomorphism $\nu: X \longrightarrow Y$. For a given $y\in Y$ if we put $x=\mu(y)$ and $z=\nu(x)$, then, using Lemma \ref{l3} we have $\varphi(\mathcal{U}_x)\subseteq \mathcal{U}_y$ and also
$\varphi^{-1}(\mathcal{U}_z)\subseteq \mathcal{U}_x$. Hence $\mathcal{U}_z\subseteq \mathcal{U}_y$, which implies that $y=z$. That is $\nu(\mu(y))=y$. Similarly we have $\mu(\nu(x))=x$. Thus $\mu^{-1}$ is, indeed, the associated homeomorphism to $\varphi^{-1}$.

 Now  we investigate  more properties of $\mu$ and its relations to
$\varphi$ whenever  $A$ and $B$ are, in addition, closed under multiplication. First we extend the property stated in Lemma \ref{l3}.

\begin{lem}\label{c1}
Assume, furthermore, that $A$ and $B$ are  closed under multiplication. If  $f,g \in A$ and $y\in Y$
such that $f=g$ on a neighborhood of $\mu(y)$, then $\varphi(f) =
\varphi(g)$ on a neighborhood of $y$.
\end{lem}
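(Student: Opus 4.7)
The plan is to reduce the claim to an equation between members of $A$ that is trivially transported across $\varphi$, and then to use Lemma \ref{l3} to remove an auxiliary term near $y$.

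Set $x=\mu(y)$ and let $V$ be an open neighborhood of $x$ on which $f=g$. First I would construct a cutoff $u\in A$ which is $0$ near $x$ and $1$ outside $V$: pick an open neighborhood $W$ of $x$ with $\overline{W}\subseteq V$, note that $\overline{W}$ and $X\setminus V$ are disjoint closed subsets of $X$, and apply Urysohn's property to obtain $u\in A$ with $u\equiv 0$ on $\overline{W}$ and $u\equiv 1$ on $X\setminus V$.

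Next I would verify the pointwise identity $f\vee u=g\vee u$ on all of $X$. On $V$ this is immediate from $f=g$; on $X\setminus V$ both sides equal $1$, since $u=1$ there and both $f,g$ take values in $I$. Applying the lattice isomorphism $\varphi$ gives
\[
\varphi(f)\vee\varphi(u)=\varphi(g)\vee\varphi(u).
\]

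Finally, I would dispose of $\varphi(u)$ locally near $y$. Since $u$ vanishes on the neighborhood $W$ of $x$, we have $u\in\mathcal{U}_{x}$; and since $\mu(y)=x$ means $y\in C_{x}$, Lemma \ref{l3} yields $\varphi(u)\in\mathcal{U}_{y}$, so $\varphi(u)\equiv 0$ on some neighborhood $N$ of $y$. Restricting the displayed equation to $N$ and using that $\varphi(f),\varphi(g)\ge 0$ collapses it to $\varphi(f)=\varphi(g)$ on $N$, as required.

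There is no real obstacle: the only genuine step is finding the right way to encode ``$f=g$ on $V$'' as an equality between two elements of $A$, and a single Urysohn cutoff $u$ with $u=1$ off $V$ does precisely this via the $\vee$ operation. It is worth noting that this argument uses only the lattice and Urysohn structure and does not actually invoke multiplicative closure of $A$ and $B$; that hypothesis will be used in the later steps of the proof of Theorem \ref{t2}.
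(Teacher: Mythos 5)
Your argument is correct, and it is genuinely different from the paper's. The paper proves the lemma by decomposing $f=ff_1\vee ff_2$, where $f_1\vee f_2=1$, $f_1$ is supported in $U$ and $f_2$ vanishes near $\mu(y)$; it then shows $ff_1\le g$ globally (hence $\varphi(ff_1)\le\varphi(g)$) and kills $\varphi(ff_2)$ near $y$ via Lemma \ref{l3}, obtaining $\varphi(f)\le\varphi(g)$ on a neighborhood of $y$, with the reverse inequality by symmetry. This visibly uses the multiplicative closure of $A$ through the products $ff_1$ and $ff_2$. You instead encode the local equality $f=g$ on $V$ as the single global lattice identity $f\vee u=g\vee u$, where $u\in A$ is $0$ near $\mu(y)$ and $1$ off $V$; applying $\varphi$ and erasing $\varphi(u)$ near $y$ (again via Lemma \ref{l3}, which applies since $\mu(y)=x$ is equivalent to $y\in C_x$ by the remark in the proof of Lemma \ref{l5}) finishes in one step, using only that $A$ and $B$ are sublattices. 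Each step checks out: $f\vee u=g\vee u$ holds on $V$ because $f=g$ there and on $X\setminus V$ because both sides equal $1$, and the final collapse uses only that $\varphi(f),\varphi(g)\ge 0$. The payoff of your route is twofold: it is shorter (no need to run the argument twice for the two inequalities), and it shows the multiplicativity hypothesis is superfluous for this particular lemma --- a genuine, if modest, strengthening, since Lemma \ref{l6} depends only on Lemma \ref{c1} and would inherit the same weakening. The only point worth spelling out a little more carefully is the identification $\mu(y)=x\Leftrightarrow y\in C_x$, which you use implicitly and which the paper justifies in Lemmas \ref{l4} and \ref{l5}.
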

\begin{proof}
Let $U$ be an open neighborhood of $\mu(y)$ in
$ X $ such that $f= g$ on $U$. Choose an open
neighborhood $U_{1}$ of $\mu(y)$ with  $\overline{U_{1}} \subseteq
U$. Using  Urysohn's property of $A$, there are functions
$f_{1}, f_{2} \in A $ such that $ f_{1}=0$ on $X\backslash U$ ,$f_{2}=0 $ on $ U_{1}
$ and $ f_{1} \vee f_{2} =1 $.  Hence, $ff_1$ and $ff_2$ are elements of $A$ such that $f=ff_1 \vee ff_2$. Since
$f=g$ on $U$, we conclude that $ ff_{1} \leq g $ on $ U $. On the
other hand $ ff_{1} =0 $ on $ X\backslash U $. Thus we have $ff_{1} \leq g $ and consequently $\varphi(ff_1)\le \varphi(g)$. Since $ff_{2}=0$ on $U_1$, it follows from Lemma \ref{l3} that $\varphi(ff_{2})=0$ on  an  open
neighborhood $V$ of $y$. Thus  $ \varphi(f)=\varphi(ff_1) \vee \varphi(ff_2)=\varphi(ff_{1})$ on $V$, which concludes that $\varphi(f) \leq \varphi(g)$ on $V$.

A similar argument shows that $\varphi(f)\geq \varphi(g) $ on an open neighborhood of $y$, as desired.
\end{proof}

\begin{lem}\label{l6}
Assume that $A$ and $B$ are closed under multiplication. For each $f,g\in A$ and $y\in Y$, the inequality $f(\mu(y))<g(\mu(y))$ implies that $\varphi(f)(y) \le \varphi(g)(y)$.
\end{lem}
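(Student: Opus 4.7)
The plan is to reduce the inequality at the single point $y$ to an equality of functions on a neighborhood via Lemma \ref{c1}, using the lattice operations $\wedge$ and $\vee$ together with the strict inequality $f(\mu(y))<g(\mu(y))$.

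First I would set $x=\mu(y)$ and, using continuity of $f$ and $g$, pick an open neighborhood $U$ of $x$ on which $f<g$ holds pointwise. On $U$ we then have the pointwise identities $f\wedge g=f$ and $f\vee g=g$. Because $A$ is a sublattice, both $f\wedge g$ and $f\vee g$ lie in $A$, so Lemma \ref{c1} (which is where the multiplicative closure of $A$ enters) applies and tells us that
\[
\varphi(f\wedge g)=\varphi(f)\quad\text{and}\quad \varphi(f\vee g)=\varphi(g)
\]
on some open neighborhood $V$ of $y$ in $Y$.

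On the other hand, since $\varphi$ is a lattice isomorphism we also have the global identities $\varphi(f\wedge g)=\varphi(f)\wedge\varphi(g)$ and $\varphi(f\vee g)=\varphi(f)\vee\varphi(g)$. Combining this with the neighborhood identity, on $V$ we get $\varphi(f)\wedge\varphi(g)=\varphi(f)$, equivalently $\varphi(f)\le\varphi(g)$ on $V$. Evaluating at $y\in V$ yields $\varphi(f)(y)\le\varphi(g)(y)$, which is exactly what we want.

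There is no real obstacle here; the only subtle point is that Lemma \ref{c1} requires $A$ and $B$ to be closed under multiplication, which is why this lemma is stated under that standing hypothesis rather than for general sublattices with Urysohn's property. Note also that the conclusion is only $\le$ and not $<$: equality at $y$ is possible, and indeed this matches the fact that the representing functions $m_y$ in Theorem \ref{t2} are only asserted to exist on a dense $G_\delta$ subset $Y_0$ of $Y$ rather than all of $Y$.
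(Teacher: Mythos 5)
Your proof is correct and follows essentially the same route as the paper: both reduce to the identity $f=f\wedge g$ on a neighborhood of $\mu(y)$, apply Lemma \ref{c1} to get $\varphi(f)=\varphi(f\wedge g)=\varphi(f)\wedge\varphi(g)$ near $y$, and conclude $\varphi(f)\le\varphi(g)$ there. The extra remarks about $f\vee g$ and about why equality at $y$ cannot be excluded are fine but not needed.
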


\begin{proof}
By hypotheses $f<g$ on a neighborhood of $U$ of $\mu(y)$ in $X$, in particular,  $f=f\wedge g$ on $U$.
Hence, by the above lemma we have  $\varphi(f)=\varphi(f\wedge  g)$ on a neighborhood of $y$. Thus $\varphi(f)\le \varphi(g)$ on this neighborhood.
\end{proof}

We note that since $A$ and $B$ are not assumed to contain the constants we require to state the next lemma.

\begin{lem} \label{l7}
Let $A$ and $B$ be closed under multiplication and the evaluation of $A$ on $X$ be dense in $I$. Then for each interval $(r,s)$ in $I$ there exists a function $ f \in A $, with $r<f(t) <s$ for all $t\in X$.
\end{lem}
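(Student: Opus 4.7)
The plan is to produce $f$ by locally choosing functions with values in $(r,s)$ using density of evaluations, then gluing them with Urysohn-type ``bumps'' via the multiplicative structure of $A$.

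First, for each $x \in X$, density of $A_x$ in $I$ gives a function $g_x \in A$ with $g_x(x) \in (r,s)$. By continuity of $g_x$, there is an open neighborhood $V_x$ of $x$ with $g_x(V_x) \subseteq (r,s)$, and compactness of $X$ extracts a finite subcover $V_1,\dots,V_n$ with associated functions $g_1,\dots,g_n$. Using normality of $X$ (the ``shrinking lemma''), I would then find open sets $U_i$ with $\overline{U_i} \subseteq V_i$ such that $\{U_i\}$ still covers $X$, and apply Urysohn's property of $A$ to the disjoint closed pair $(\overline{U_i},\, X\setminus V_i)$ to obtain $h_i \in A$ with $h_i = 1$ on $\overline{U_i}$ and $h_i = 0$ off $V_i$.

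Set
\[
f \;=\; \bigvee_{i=1}^{n} g_i\, h_i,
\]
which lies in $A$ since $A$ is a sublattice closed under multiplication. For the lower bound, any $x \in X$ lies in some $U_j$, so $h_j(x) = 1$ and therefore $f(x) \geq g_j(x) > r$. For the upper bound, fix $x \in X$ and any $i$: if $h_i(x) = 0$ then $g_i(x) h_i(x) = 0$; if $h_i(x) > 0$ then $x \in V_i$, forcing $g_i(x) < s$ and hence $g_i(x) h_i(x) \leq g_i(x) < s$. Since $s > r \geq 0$ implies $s > 0$, we obtain $f(x) < s$ in both cases, so $f(x) \in (r,s)$ for every $x \in X$.

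No step here is a serious obstacle; the argument is a standard compactness/partition-of-unity technique adapted to the lattice--multiplicative setting. The only subtlety worth flagging is that $A$ is not assumed to contain constants other than $0$ and $1$, so one cannot truncate by a cutoff $c \in (r,s)$ directly. Multiplying by the Urysohn bumps $h_i$ simultaneously performs the local cutoff, preserves strict inequality on both sides, and keeps us inside $A$ thanks to the closure under multiplication.
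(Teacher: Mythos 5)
Your proof is correct and follows essentially the same route as the paper's: locally pick functions valued in $(r,s)$ by density of the evaluations, cut them off with Urysohn bumps via multiplication, and take the finite supremum, with the same two-case check for the upper bound. The only cosmetic difference is that you shrink the cover after extracting a finite subcover, while the paper shrinks each neighborhood before applying compactness; both are fine.
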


\begin{proof}
Let $ x \in X $. Since the evaluation of $A$ on $X$ is dense
in $I$, there is a function $ f_{x} \in A $ such that $ r <
f_{x}(x) < s $. Choose an open neighborhood $U_{x}$ of  $x$ such that $r<f_{x}(t) < s $ for all $t \in
U_{x}$. Let $ W_{x} $ be an open neighborhood of $x$
with $\overline{W_{x}}\subseteq U_x$. By Urysohn's property of $A$
there is a function $g_{x}\in A $ with $g_{x}=1$ on $W_{x}$ and $ {\rm supp}(g_{x})\subseteq U_{x} $. Put $h_{x} = g_{x}f_{x}$.  Then $h_x\in A$ and $ r< h_{x} <s $ on $ W_x $.
Since $ \{W_{x}\}_{x \in X}$
is an open cover of $X$,  there are  $x_{1}, x_{2}, ... , x_{n}
\in X $ such that $X=\cup_{i=1}^{n} W_{x_i} $.  Set  $ f = \bigvee _{i=1}^{n}h_{x_{i}} $.
Then $f\in A$ and since  for any $ t\in X $ there exists  $1\le i\le n$ with  $ t \in W_{x_{i}}$ it follows that $f(t) \ge h_{x_i}(t) > r$. If $f(t)\ge s$ for some $t\in X$, then there exists $1\le j\le n$  such that $  h_{x_j}(t) \ge s$. Therefore, $f_{x_j}(t)\ge h_{x_j}(t)\ge s$ which implies that $t\notin U_{x_j}$. Hence $g_{x_j}(t)=0$ which is impossible, since $s\le h_{x_j}(t)=g_{x_j}(t) f_{x_j}(t)$. This argument shows that for each $t\in X$, we have $r<f(t)<s$, as desired.
 \end{proof}

 For any pair $f,g\in A$ we put
\[I_{f,g}=\{y\in Y: \varphi(f)(y)\neq \varphi(g)(y)\; {\rm  whenever}\;  f(\mu(y)) \neq g(\mu(y))\}\]
and $I_{\varphi}=\cap \{I_{f,g}: f,g\in A\}$

 \begin{lem}\label{p1}
 Under the assumptions of Lemma \ref{l7}, $ I_{\varphi}=Y$ if and only if  $\varphi$ is strictly increasing, that is $f<g$ implies $\varphi(f)<\varphi(g)$ for all $f,g\in A$.
 \end{lem}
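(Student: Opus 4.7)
The plan is to prove the two implications separately; the forward direction is immediate from Lemma~\ref{l6}, while the converse requires a localization argument based on Urysohn's property. For the forward direction, suppose $I_{\varphi}=Y$ and take $f,g\in A$ with $f<g$ pointwise. For each $y\in Y$ we have $f(\mu(y))<g(\mu(y))$, so Lemma~\ref{l6} gives $\varphi(f)(y)\le\varphi(g)(y)$, while $y\in I_{\varphi}\subseteq I_{f,g}$ forces $\varphi(f)(y)\ne\varphi(g)(y)$. Combining, $\varphi(f)(y)<\varphi(g)(y)$ for every $y\in Y$, i.e.\ $\varphi(f)<\varphi(g)$.

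For the converse, suppose $\varphi$ is strictly increasing and fix $y\in Y$ and $f,g\in A$ with $f(\mu(y))\ne g(\mu(y))$; without loss of generality $f(\mu(y))<g(\mu(y))$. The key idea is to replace $(f,g)$ by a pair $(f',g')\in A\times A$ satisfying $f'<g'$ pointwise on $X$ while $f'=f$ and $g'=g$ on a neighborhood of $\mu(y)$. Granting such a pair, strict monotonicity of $\varphi$ yields $\varphi(f')(y)<\varphi(g')(y)$, Lemma~\ref{c1} gives $\varphi(f')(y)=\varphi(f)(y)$ and $\varphi(g')(y)=\varphi(g)(y)$, whence $\varphi(f)(y)<\varphi(g)(y)$, so $y\in I_{f,g}$. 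To construct $(f',g')$, use continuity to pick an open neighborhood $W$ of $\mu(y)$ on which $f<g$, and then an open $V$ with $\overline{V}\subseteq W$. Urysohn's property of $A$ supplies $\phi,\psi\in A$ with $\phi=1$ on $\overline{V}$, $\phi=0$ on $X\setminus W$, $\psi=0$ on $\overline{V}$, and $\psi=1$ on $X\setminus W$. Setting $f'=f\wedge\phi$ and $g'=g\vee\psi$ (both in $A$, since $A$ is a sublattice), one checks: on $V$, $\phi=1$ and $\psi=0$ give $f'=f$ and $g'=g$; on $X\setminus W$, $f'=0<1=g'$; and on $W$, $f'\le f<g\le g'$. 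Thus $f'<g'$ pointwise on $X$, as required.

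The only real obstacle is the construction of the globally strictly ordered pair $(f',g')$ that agrees with $(f,g)$ near $\mu(y)$, and Urysohn's property handles it cleanly by furnishing the two complementary cut-offs $\phi,\psi$, which the lattice operations $\wedge,\vee$ then assemble into $f',g'$. Notably, Lemma~\ref{l7} and the density hypothesis on the evaluations of $A$ are not needed in this argument; only Urysohn's property and the sublattice structure are used directly, while multiplicative closure enters through Lemmas~\ref{l6} and \ref{c1}.
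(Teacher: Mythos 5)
Your proof is correct, and the nontrivial direction (strictly increasing $\Rightarrow I_\varphi=Y$) is handled by a genuinely different construction than the paper's. Both arguments share the same skeleton --- replace $(f,g)$ by a pair that is strictly ordered on all of $X$ yet still determines $\varphi(f)(y)$ and $\varphi(g)(y)$ --- but the global separation is produced differently. The paper invokes Lemma~\ref{l7} to manufacture two ``band'' functions $f_{r_0,s_0}<s_0<r_1<f_{r_1,s_1}$ and sandwiches $f\wedge f_{r_0,s_0}<g\vee f_{r_1,s_1}$, then transfers back to $f,g$ at $y$ via Lemma~\ref{l6}; this route genuinely uses the density of the evaluation $A_{\mu(y)}$ in $I$ (needed both to choose $r_0<s_0<r_1<s_1$ inside $(f(\mu(y)),g(\mu(y)))$ and to apply Lemma~\ref{l7}). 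You instead cut $f$ down to $0$ and lift $g$ up to $1$ off a neighborhood of $\mu(y)$ using Urysohn functions and the lattice operations, getting $f'<g'$ everywhere with $f'=f$, $g'=g$ near $\mu(y)$, and then conclude via the locality Lemma~\ref{c1}. Your version is more economical: as you observe, it dispenses with Lemma~\ref{l7} and the density-of-evaluations hypothesis entirely, so it establishes the lemma under only Urysohn's property plus multiplicative closure (the latter entering solely through Lemmas~\ref{l6} and~\ref{c1}); the paper's version, by contrast, fits into a chain of arguments where the band functions $f_{p,q}$ are reused later (in the proof that $I_\varphi$ is a dense $G_\delta$). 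The easy direction is the same in both, modulo your optional use of Lemma~\ref{l6} where plain order preservation already suffices.
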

 \begin{proof}
Let $\varphi$ be strictly increasing and assume on the contrary that $I_{\varphi} \neq Y$. Then there is $y \in Y$ such that
  $ y \notin I_{\varphi}$. Hence there are functions $ f,g \in A $ such that $f(\mu(y)) < g(\mu(y)) $ and $\varphi(f)(y) = \varphi(g)(y) $.  Choose $r_{0}, r_{1}, s_{0}, s_{1} \in I$ with $ f(\mu(y)) < r_{0} <s_{0}  < r_{1} < s_{1} < g(\mu(y))$.
Let $ U$ be an open neighborhood of $ \mu(y) $ such that $ f(t) < r_{0} <s_{0}  < r_{1} < s_{1} < g(t)  $ for all $t \in U$.
By Lemma \ref{l7} there are functions $ f_{r_{0},s_{0}} , f_{r_{1},s_{1}} \in A$ such that
 $r_0<f_{r_0,s_0}<s_0$ and $r_1<f_{r_1,s_1}<s_1$. Thus, \[f \wedge f_{r_{0},s_{0}}< s_{0} <r_1 < g \vee f_{r_{1},s_{1}}.\]
 Since $\varphi$ is strictly increasing it follows that
 \[\varphi(f \wedge f_{r_{0},s_{0}}) < \varphi(g \vee f_{r_{1},s_{1}}). \]

Clearly $f< f_{r_0,s_0}$ on $U$ and consequently $f<f_{r_0,s_0}
\wedge f$ on $U$. Thus, by Lemma \ref{l6} we have  $\varphi(f)(y)
\le \varphi(f \wedge f_{r_{0},s_{0}})(y)$.  In a similar manner we
have $\varphi(g)(y) \ge \varphi (g \vee  f_{r_1,s_1})(y)$. Hence
$\varphi(f)(y)<\varphi(g)(y)$  which is
a contradiction. This argument shows that $I_\varphi=Y$.

The converse statement is trivial. Indeed, assume that $I_\varphi=Y$ and $y\in Y$. For each $f,g\in A$ with $f<g$, since $y\in I_{f,g}$, we have $\varphi(f)(y)\neq \varphi(g)(y)$, that is $\varphi(f)(y)<\varphi(g)(y)$.
 \end{proof}
 \begin{prop}\label{p2}
Assume that $A$ and $B$ are closed under multiplication and the evaluation of $A$ and $B$ on $X$ and $Y$, respectively are  dense in $I$. If
$\varphi$ is continuous (with respect to the uniform convergence topology), then $ \varphi^{-1} $ is strictly increasing.
\end{prop}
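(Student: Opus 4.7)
The plan is to argue by contradiction. Assuming $\varphi^{-1}$ fails to be strictly increasing, I will use Urysohn's property together with the closure of $A$ under multiplication to construct a uniformly convergent net whose image under $\varphi$ does \emph{not} converge at a certain point, contradicting the uniform continuity of $\varphi$.

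Concretely, suppose $\varphi^{-1}$ is not strictly increasing. Since $\varphi^{-1}$ is an order preserving bijection, this means there exist $g_1, g_2 \in B$ with $g_1 < g_2$ (pointwise) such that, setting $f_i = \varphi^{-1}(g_i)$, one has $f_1 \le f_2$ but $f_1(x_0) = f_2(x_0)$ for some $x_0 \in X$. Let $y_0 = \mu^{-1}(x_0) \in Y$, so that $g_1(y_0) < g_2(y_0)$.

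The key construction is the following. For each open neighborhood $U$ of $x_0$, choose an open neighborhood $V$ with $\overline{V} \subseteq U$, and use Urysohn's property of $A$ to pick $p_U \in A$ with $p_U = 1$ on $\overline{V}$ and $p_U = 0$ on $X \setminus U$. Since $A$ is closed under multiplication and lattice operations, the function
\[
h_U = f_1 \vee (f_2\, p_U)
\]
lies in $A$. On $V$ we have $h_U = f_1 \vee f_2 = f_2$, on $X \setminus U$ we have $h_U = f_1$, and everywhere $f_1 \le h_U \le f_2$. Since $h_U$ coincides with $f_2$ on the neighborhood $V$ of $x_0 = \mu(y_0)$, Lemma \ref{c1} yields $\varphi(h_U) = \varphi(f_2) = g_2$ on some neighborhood of $y_0$; in particular $\varphi(h_U)(y_0) = g_2(y_0)$ for every such $U$.

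Next, I order the open neighborhoods of $x_0$ by reverse inclusion and view $(h_U)$ as a net in $A$. The inequality
\[
\|h_U - f_1\|_\infty \le \sup_{x \in U}\bigl(f_2(x) - f_1(x)\bigr),
\]
together with the continuity of $f_2 - f_1$ and the vanishing of $f_2 - f_1$ at $x_0$, shows that $h_U \to f_1$ uniformly on $X$. By the hypothesized continuity of $\varphi$, $\varphi(h_U) \to \varphi(f_1) = g_1$ uniformly on $Y$, so $\varphi(h_U)(y_0) \to g_1(y_0)$. Combined with $\varphi(h_U)(y_0) = g_2(y_0)$ for all $U$, this forces $g_1(y_0) = g_2(y_0)$, contradicting $g_1(y_0) < g_2(y_0)$.

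The main obstacle is engineering a net $(h_U)$ that simultaneously (a) agrees with $f_2$ on a neighborhood of $x_0$, so that Lemma \ref{c1} pins the value of $\varphi(h_U)$ at $y_0$ to $g_2(y_0)$, and (b) converges to $f_1$ \emph{uniformly} (not merely pointwise), so that continuity of $\varphi$ can be invoked. Both features are delivered by the multiplicative cut-off $f_2 p_U$ sandwiched between $f_1$ and $f_2$; the density-in-$I$ hypothesis is only needed indirectly, through the ambient hypotheses that were required to set up Lemma \ref{c1}.
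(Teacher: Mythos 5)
Your proof is correct, but it follows a genuinely different route from the paper's. The paper first applies Lemma \ref{p1} to $\varphi^{-1}$ to locate a point $y_0$ with $\mu(y_0)\notin I_{\varphi^{-1}}$, and then perturbs the smaller function \emph{upward}: using the density of $A_{\mu(y_0)}$ in $I$ it builds functions $f_\epsilon=(h_1h_2)\vee f_0$ with $\|f_\epsilon-f_0\|_\infty\le\epsilon$ and $f_\epsilon(\mu(y_0))>f_0(\mu(y_0))=g_0(\mu(y_0))$, so that Lemma \ref{l6} forces $\varphi(f_\epsilon)(y_0)\ge\varphi(g_0)(y_0)$, contradicting continuity; a separate case $f_0(\mu(y_0))=1$ is handled via Lemma \ref{c1}. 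You instead unwind the failure of strict monotonicity directly into a pair $f_1\le f_2$ with $f_1(x_0)=f_2(x_0)$ and interpolate \emph{between} them by the cut-off $h_U=f_1\vee(f_2p_U)$, which agrees with $f_2$ near $x_0$ (so Lemma \ref{c1} pins $\varphi(h_U)(y_0)=g_2(y_0)$) while converging uniformly to $f_1$ because $f_2-f_1$ is continuous and vanishes at $x_0$. Your argument avoids the case split entirely and, more significantly, never uses the density-in-$I$ hypothesis (contrary to your closing remark, Lemma \ref{c1} needs only Urysohn's property and closure under multiplication), whereas the paper's proof uses density both through Lemma \ref{p1} and in constructing $h_1$; so your version establishes the proposition under slightly weaker hypotheses. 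The paper's approach, in exchange, reuses machinery ($I_{\varphi^{-1}}$, Lemma \ref{l6}) that is needed elsewhere anyway. A cosmetic remark: since the uniform topology is metrizable you could replace the net $(h_U)$ by a sequence $h_{U_n}$ with $\sup_{U_n}(f_2-f_1)<1/n$, but the net formulation is equally valid.
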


\begin{proof}
Assume on the contrary that $\varphi^{-1}$ is not strictly
increasing. Using Lemma \ref{p1} for $\varphi^{-1}$ instead of
$\varphi$ we have  $I_{\varphi^{-1}} \neq X $. Thus there exists a
point $y_0\in Y$ such that $\mu(y_{0}) \notin I_{\varphi ^{-1}}$.
Therefore,  there are functions $f_0, g_0 \in A$  such that
$f_0(\mu(y_0)) = g_0 (\mu(y_0))$ while  $\varphi(f_0)(y_0) <
\varphi(g_0)(y_{0})$. Replacing $f_0$ by $f_0\vee g_0$ we may
assume that $f_0\le g_0$.  We consider two following cases:

{\bf Case 1}. $0 \leq f_0(\mu(y_0) ) <1$.

In this case we shall show that there exists a sequence $\{f_n\}$
in $A$ converging uniformly to $f$ on $X$ such that
$f_n(\mu(y_0))>f_0(\mu(y_0))$ for all $n\in \Bbb N$.

Given $\epsilon>0$, since $A_{\mu(y_0)}$ is dense in $I$, there exists $h_1\in A$ such that $0<h_1(\mu(y_0))-f_0(\mu(y_0))<\epsilon$. Let $U$ be an open neighbourhood of $\mu(y_0)$ in $X$ such that $0<h_1(t)-f_0(t)<\epsilon$ for all $t\in U$. Choose  an open neighbourhood $V$ of $\mu(y_0)$ such that $\overline{V}\subseteq U$. Then we can find $h_2\in A$ with $h_2=1$ on $V$ and ${\rm supp}(h_2)\subseteq U$. We put $h_\epsilon=h_1h_2$ and $f_\epsilon= h_\epsilon \vee f_0$. Clearly $f_\epsilon\in A$ and $f_\epsilon(\mu(y_0))>f_0(\mu(y_0))$. We claim that $0\le f_\epsilon(t)-f_0(t)\le \epsilon$ for all $t\in X$. Fix a point $t\in  X$. If $t\in U$, then  $h_\epsilon (t)-f_0(t)\le h_1(t)-f_0(t)<\epsilon$. Hence in either of cases that $f_\epsilon(t)=f_0(t)$ or $f_\epsilon(t)=h_\epsilon(t)$ we have  $0\le f_\epsilon(t)-f_0(t)\le \epsilon$.
If $t\in X\backslash U$, then $h_\epsilon(t)=0$ and consequently $f_\epsilon(t)- f_0(t)=f_0(t)-f_0(t)=0$.
This argument shows that $0\le f_\epsilon(t)-f_0(t)\le \epsilon$ for all $t\in X$.
Therefore, there exists a sequence $\{f_n\}_{n\in \Bbb N}$ converging uniformly on $X$ to $f_0$ and for each $n\in \Bbb N$
\[f_n(\mu(y_0))> f_{0}(\mu(y_{0}))=g_0(\mu(y_0)).\]
 Now the continuity assumption on $\varphi$ implies that $\{\varphi(f_{n}) \} $ converges uniformly on $Y$ to $\varphi(f)$.
 On the other hand, since for each $n\in \Bbb N$,
 \[f_n(\mu(y_{0}))  > g_0(\mu(y_{0})) \] it follows from Lemma \ref{l6} that $ \varphi(f_{n})(y_{0}) \geq \varphi(g_0)(y_{0})$  for all $n \in \mathbb{N} $ which is impossible, since $\varphi (f_0)(y_{0})<\varphi(g_0)(y_{0}$.

{\bf Case 2.} $f_{0}(\mu(y_{0})) = 1$.

The argument in this case is a minor modification of Case 1. In
this case we find a sequence $\{f_n\}$ in $A$ converging uniformly
to $f_0$ and such that $f_n=1$ on a negibourhood of $\mu(y_0)$.
For this, given $\epsilon>0$,  we choose an open neighbourhod of
$\mu(y_0)$ such that $1-f_0(t)<\epsilon$ for all $t\in U$ and then
we choose an open neighbourhood $V$ of $\mu(y_0)$ such that
$\overline{V}\subseteq U$.  Then there exists a function
$h_\epsilon\in A$ satisfying $h_\epsilon=1$ on $V$ and ${\rm
supp}(h_\epsilon)\subseteq U$. We put $f_\epsilon= h_\epsilon \vee
f_0$. Then $f_\epsilon\in A$, $f_\epsilon=1$ on $V$ and an easy
verification shows that $|f_\epsilon(t)-f_0(t)|\le \epsilon$ for
all $t\in X$. Hence we can find a sequence $\{f_n\}$ in $A$
satisfying the desired conditions. Since $f_n=1$ on a
neighbourhood of $\mu(y_0)$ it follows from Lemma \ref{c1} that
$\varphi(f_n)(y_0)=1$ for all $n\in \BN$. This  implies that
$\varphi(f_0)(y_0)=1$ which is impossible, since
$\varphi(f_0)(y_0)< \varphi(g_0)(y_0)$.
\end{proof}

In the next Lemmas $A$ and $B$ are, furthermore, closed under multiplication and their evaluations on $X$ and $Y$ are dense in $I$.
\begin{lem}
The set  $I_\varphi$ is a dense  $G_\delta$ subset of $Y$.
\end{lem}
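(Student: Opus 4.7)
The plan is to exhibit $I_\varphi$ as a countable intersection of dense open subsets of $Y$ and conclude via the Baire category theorem. For every pair of rationals $0 \le r < r' \le 1$, I would use Lemma~\ref{l7} to fix a function $h_{r,r'} \in A$ with $r < h_{r,r'}(t) < r'$ for all $t \in X$, and set $\eta_{r,r'} := \varphi(h_{r,r'}) \in B$. For each rational quadruple $r_1 < r_1' < r_2 < r_2'$ in $[0,1]$ define
\[
V_{r_1, r_1', r_2, r_2'} := \{y \in Y : \eta_{r_1, r_1'}(y) < \eta_{r_2, r_2'}(y)\},
\]
which is open by continuity of the $\eta$'s, and there are only countably many such quadruples. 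The first step is to identify $I_\varphi$ with $\bigcap V_{r_1, r_1', r_2, r_2'}$. The inclusion $I_\varphi \subseteq \bigcap V_{r_1, r_1', r_2, r_2'}$ is direct from Lemma~\ref{l6}, since $h_{r_1, r_1'}(\mu(y)) \in (r_1, r_1')$ and $h_{r_2, r_2'}(\mu(y)) \in (r_2, r_2')$ are distinct. For the reverse inclusion, given $f, g \in A$ with $f(\mu(y)) < g(\mu(y))$, I would insert four rationals $f(\mu(y)) < r_1 < r_1' < r_2 < r_2' < g(\mu(y))$ and apply Lemma~\ref{l6} twice to sandwich
\[
\varphi(f)(y) \le \eta_{r_1, r_1'}(y) < \eta_{r_2, r_2'}(y) \le \varphi(g)(y).
\]
This yields the $G_\delta$ property, and what remains is to prove that each $V_{r_1, r_1', r_2, r_2'}$ is dense in $Y$.

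The main obstacle is this density, argued by contradiction. Suppose $W := Y \setminus V_{r_1, r_1', r_2, r_2'} = \{\eta_{r_1,r_1'} = \eta_{r_2,r_2'}\}$ has nonempty interior. Using regularity of compact Hausdorff $Y$, I shrink to a nonempty regular open $U$ with $\overline{U} \subseteq W$, so that $\partial U$ has empty interior, and fix a smaller open $U_0$ with $\overline{U_0} \subseteq U$. For every $f \in A$ with values in $[r_1', r_2]$ the pointwise chain $h_{r_1,r_1'}(t) < r_1' \le f(t) \le r_2 < h_{r_2,r_2'}(t)$ combined with Lemma~\ref{l6} squeezes $\eta_{r_1,r_1'} \le \varphi(f) \le \eta_{r_2,r_2'}$ on $Y$, and on $U \subseteq W$ this collapses to $\varphi(f) = \eta_{r_1,r_1'}$. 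Next I would produce two such $f$'s that coincide off $\overline{\mu(U)}$ yet differ on $\overline{\mu(U_0)}$: fix $\delta \in (0, (r_2 - r_1')/2)$ and, via Lemma~\ref{l7}, pick $h_1, h_2 \in A$ with values in $(r_1', r_1' + \delta)$ and $(r_2 - \delta, r_2)$ respectively; via Urysohn's property of $A$ pick $\tilde{\psi} \in A$ equal to $1$ on $\overline{\mu(U_0)}$ and to $0$ on $X \setminus \mu(U)$. Closure of $A$ under $\vee$ and multiplication yields $f^{\vee} := h_1 \vee (\tilde{\psi} h_2) \in A$, with values in $(r_1', r_2)$, equal to $h_1$ on $X \setminus \mu(U)$ and to $h_2 > h_1$ on $\overline{\mu(U_0)}$.

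Combining the two sources of agreement then produces the contradiction. The squeeze gives $\varphi(f^{\vee}) = \varphi(h_1)$ on $U$, while Lemma~\ref{c1} applied on the open set $X \setminus \overline{\mu(U)}$ (where $f^{\vee} = h_1$) gives $\varphi(f^{\vee}) = \varphi(h_1)$ on $Y \setminus \overline{U}$. These equalities cover the dense open set $Y \setminus \partial U$, and continuity propagates them to all of $Y$; then injectivity of $\varphi$ forces $f^{\vee} = h_1$ in $A$, contradicting $f^{\vee} = h_2 > h_1$ on $\overline{\mu(U_0)} \ne \emptyset$. Thus each $V_{r_1, r_1', r_2, r_2'}$ is dense, and since compact Hausdorff $Y$ is a Baire space, $I_\varphi = \bigcap V_{r_1, r_1', r_2, r_2'}$ is a dense $G_\delta$. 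The delicate point is the engineering of $f^{\vee}$, which must simultaneously lie in the value window $[r_1', r_2]$ (so the squeeze applies on $U$), coincide with $h_1$ off a regular open set (so Lemma~\ref{c1} transports equality across $\partial U$), and differ from $h_1$ on a nonempty set (for the final contradiction); this triple requirement is exactly what Lemma~\ref{l7}, Urysohn's property, and multiplicative closure of $A$ are tailored to provide.
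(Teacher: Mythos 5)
Your proof is correct, and its skeleton --- writing $I_\varphi$ as a countable intersection of open sets indexed by quadruples from a countable dense subset of $I$, with the test functions supplied by Lemma~\ref{l7}, and finishing with the Baire category theorem --- is exactly the paper's. Where you diverge is in the one substantive step, the density of each open set. The paper handles this in two lines: if $\varphi(f_0)=\varphi(g_0)$ on a nonempty open set with $f_0<g_0$ everywhere, it applies Lemma~\ref{c1} to $\varphi^{-1}$ (whose associated homeomorphism was shown to be $\mu^{-1}$) to conclude $f_0(\mu(y))=g_0(\mu(y))$ there, a contradiction. You instead build the perturbation $f^{\vee}=h_1\vee(\tilde{\psi}h_2)$, squeeze $\varphi(f^{\vee})$ between the two $\eta$'s on $U$, transport the equality $\varphi(f^{\vee})=\varphi(h_1)$ across $\partial U$ by the forward Lemma~\ref{c1} plus continuity, and contradict injectivity of $\varphi$. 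Your route is longer but has the virtue of never invoking the dual statement for $\varphi^{-1}$: it uses only the forward direction of Lemmas~\ref{c1} and~\ref{l6} together with bijectivity, so it is self-contained even if one has not verified that $\mu^{-1}$ is the homeomorphism induced by $\varphi^{-1}$. The paper's route is shorter and reuses machinery already in place. Two cosmetic points in yours: the inclusion $I_\varphi\subseteq\bigcap V_{r_1,r_1',r_2,r_2'}$ needs the definition of $I_\varphi$ (for the $\neq$) in addition to Lemma~\ref{l6} (which only gives $\le$); and the reduction to a \emph{regular} open $U$ is unnecessary, since the boundary of any open set already has empty interior.
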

\begin{proof}
 First we show that for each pair $f_0,g_0 \in A$ with $f_0 < g_0$, the set $I_{f_0,g_0}$ is a dense open subset of $Y$. We note that in this case
$I_{f_0,g_0}=\{y\in Y: \varphi(f_0)(y) \neq \varphi(g_0)(y)\}$.
Hence $I_{f_0,g_0}$ is, clearly, an open subset of $Y$. Assume on
the contrary that $Y\backslash I_{f_0,g_0}$ contains an open
subset $V$ of $Y$. Then $\varphi(f_0)=\varphi(g_0)$ on $V$ and
using Lemma \ref{c1} for $\varphi^{-1}$ instead of $\varphi$ we conclude
that $f_0(\mu(y))=g_0(\mu(y))$ for all $y\in V$, which is a
contradiction.

  Now consider the dense subset $J= \cup_{x\in X} A_x$ of $I$. Then clearly  $J$ contains a  countable dense subset $J_0$.
  For each $p,q \in J_0 $ with  $p < q$ choose, by Lemma \ref{l7},  $f_{p,q}\in A$ such that
  $p<f_{p,q}<q$. We shall show that
\[I_\varphi =\cap\{I_{f_{p_1,q_1},f_{p_2,q_2}}: p_1,p_2,q_1,q_2\in
J_0\; {\rm and} \; p_1<q_1<p_2<q_2\}.\] Let $y\in Y\backslash
I_\varphi$. Then there exists $f,g\in A$ such that  $f(\mu(y)) <
g(\mu(y))$ and $\varphi(f)(y) = \varphi(g)(y)$. Choosing  $p_1,
q_1, p_2, q_2 \in J_0$ with $f(\mu(y)) < p_1 < q_1 < p_2 < q_2 <
g(\mu(y))$ we have $f(\mu(y)) < f_{p_1, q_1} < f_{p_2, q_2} <
g(\mu(y))$. Then it follows from Lemma \ref{l6} that
\[\varphi(f)(y) \le \varphi(f_{p_1, q_1})(y) \le \varphi(f_{p_2, q_2})(y) \le \varphi(g)(y).\]
Since $\varphi(f)(y)=\varphi(g)(y)$ we get  $\varphi( f_{p_1,
q_1})(y) = \varphi( f_{p_2, q_2} )(y)$, that is $y\notin
I_{f_{p_1,q_1},f_{p_2,q_2}}$. Hence
\[ \cap\{I_{f_{p_1,q_1},f_{p_2,q_2}}: p_1,p_2,q_1,q_2\in J_0\; {\rm
and} \; p_1<q_1<p_2<q_2\} \subseteq I_\varphi. \] The other
inclusion  is trivial. Thus we get the claimed equality. As  for
each $p_1,p_2,q_1,q_2\in J_0$ with $p_1<q_1<p_2<q_2$,
$I_{f_{p_1,q_1},f_{p_2,q_2}}$ is an open dense subset of $Y$, it
follows from Bair's category theorem that $I_\varphi$ is a dense
$G_\delta$ subset of $Y$.
\end{proof}
 Now we put $Y_0= I_\varphi \cap \mu^{-1}({ I_{\varphi^{-1}}})$.
As in the lemma above we can conclude that $I_{\varphi^{-1}}$ is a dense $G_\delta$ subset
of $X$. Being $\mu^{-1}$ a homeomorphism we again conclude from
Bair's category theorem that $Y_0$ is a dense $G_\delta$ subset of
$Y$. It is obvious that for each $y\in Y_0$, and $f,g\in A$ we
have $f(\mu(y))=g(\mu(y))$ if and only if
$\varphi(f)(y)=\varphi(g)(y)$.

\begin{lem}
 For each $y\in Y_0$, the map $m'_y:A_{\mu(y)}\cap
(0,1) \longrightarrow B_y\cap (0,1)$ defined by
$m'_y(f(\mu(y))=\varphi(f)(y)$, $f\in A$,  is  continuous and
strictly increasing.
\end{lem}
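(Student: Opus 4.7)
The plan is to verify, in order, that $m'_y$ is well-defined, bijective from $A_{\mu(y)}\cap(0,1)$ onto $B_y\cap(0,1)$, strictly increasing, and continuous. All four facts rest on the equivalence recalled just before the statement: for $y\in Y_0$ and $f,g\in A$, one has $f(\mu(y))=g(\mu(y))$ if and only if $\varphi(f)(y)=\varphi(g)(y)$. Well-definedness of $m'_y$ is the forward implication. Specialising this equivalence to the case where $g$ is the constant $0$ or the constant $1$ shows in addition that $f(\mu(y))\in(0,1)$ if and only if $\varphi(f)(y)\in(0,1)$; combined with the bijectivity of $\varphi:A\to B$, this shows that $m'_y$ is a bijection from $A_{\mu(y)}\cap(0,1)$ onto $B_y\cap(0,1)$, with inverse sending $F(y)\in B_y\cap(0,1)$ to $\varphi^{-1}(F)(\mu(y))$.

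For strict monotonicity, I would take $f,g\in A$ with $f(\mu(y))<g(\mu(y))$, both values in $(0,1)$. Lemma \ref{l6} gives $\varphi(f)(y)\le\varphi(g)(y)$, while the $Y_0$ property precludes equality since $f(\mu(y))\neq g(\mu(y))$. Hence $m'_y(f(\mu(y)))<m'_y(g(\mu(y)))$.

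For continuity I will invoke the purely order-theoretic observation that any strictly increasing bijection between two dense subsets of an open interval is continuous. Concretely, suppose towards a contradiction that $m'_y$ fails to be left-continuous at some $a\in A_{\mu(y)}\cap(0,1)$; then there exists $\beta<m'_y(a)$ with $m'_y(a')\le\beta$ for every $a'\in A_{\mu(y)}\cap(0,1)$ satisfying $a'<a$. Using that $B_y$ is dense in $I$, pick $c\in B_y\cap(0,1)$ with $\beta<c<m'_y(a)$. By the bijectivity just established, $c=m'_y(a'')$ for some $a''\in A_{\mu(y)}\cap(0,1)$, and strict monotonicity forces $a''<a$ (since $c<m'_y(a)$), contradicting $m'_y(a'')=c>\beta$. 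Right-continuity is obtained symmetrically, using density of $B_y$ above $m'_y(a)$.

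The one subtle point worth flagging is the appeal to surjectivity onto $B_y\cap(0,1)$ in the continuity step: this is precisely where the full definition of $Y_0$ (namely membership in $\mu^{-1}(I_{\varphi^{-1}})$ in addition to $I_\varphi$) is used, since it is this condition, applied to $\varphi^{-1}$ and $\mu^{-1}$, that allows one to realise $(m'_y)^{-1}$ as the analogous map built from $\varphi^{-1}$. Everything else is routine once the equivalence characterising $Y_0$ is in hand.
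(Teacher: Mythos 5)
Your proof is correct and follows essentially the same route as the paper's: both arguments reduce continuity to inserting, by density of $B_y$ in $I$, a value of $B_y$ strictly between the putative one-sided limit and the target value, and then pulling that value back through the order equivalence defining $Y_0$ (well-definedness and strict monotonicity are handled identically, via Lemma \ref{l6} and the two halves of the $Y_0$ condition). The only organizational difference is that you make the surjectivity of $m'_y$ onto $B_y\cap(0,1)$ explicit (using $0,1\in A$, $\varphi(0)=0$, $\varphi(1)=1$ and the bijectivity of $\varphi$) and then invoke the general fact that a strictly increasing surjection onto a dense subset of an interval is continuous, whereas the paper argues directly with monotone sequences $r_n\to r$ and pulls the intermediate $B_y$-value back to $A_{\mu(y)}$; both versions are sound.
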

\begin{proof}
Clearly for each $y\in Y_0$, $m'_y$ is well-defined and strictly
increasing. To prove that $m'_y$ is continuous, let $\{r_n\}$ be a
sequence in $A_{\mu(y)}\cap (0,1)$ converging to $r\in
A_{\mu(y)}\cap (0,1)$. We assume  
that $\{r_n\}$ is strictly increasing, since the case that $\{r_n\}$ is strictly decreasing has a similar discussion. Let $f\in A$ such that
$f(\mu(y))=r$ and for each $n\in \BN$, let $f_n\in A$ such that
$f_n(\mu(y))=r_n$. Since $y\in Y_0$ and $r_n<r_{n+1}$ it follows
that $\varphi(f_n)(y)< \varphi(f_{n+1})(y)$ for all $n\in \BN$. If
$\lim \varphi(f_n)(y) < \varphi(f)(y)$, then since $B_y\cap (0,1)$
is dense in $I$, we can find a function $g\in A$ such that $\lim
\varphi(f_n)(y) <\varphi(g)(y)< \varphi(f)(y)$. Hence
$\varphi(f_n)(y)< \varphi(g)(y)< \varphi(f)(y)$ for all $n\in
\BN$. Since $y\in Y_0$ we conclude that $f_n(\mu(y))<g(\mu(y))< f(\mu(y))$
for all $n\in \BN$. Hence $\lim f_n(\mu(y))\le g(\mu(y)) <
f(\mu(y))$, a contradiction. Thus $\lim
\varphi(f_n)(y)=\varphi(f)(y)$, that is $m'_y$ is continuous.
\end{proof}

\begin{lem}
 The map $m'_{y}$ has a unique extension to a continuous bijection on $I$.
\end{lem}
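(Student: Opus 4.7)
The plan is to construct the extension $m_y$ by taking monotone one-sided limits along the dense set $D := A_{\mu(y)} \cap (0,1)$, and then to verify that the resulting map is a continuous, strictly increasing bijection on $I$. Two density facts make this work: $A_{\mu(y)}$ is dense in $I$ by hypothesis, hence so is $D$ (discarding two points cannot destroy density in $I$); similarly $B_y \cap (0,1)$ is dense in $I$. Since $m'_y$ is strictly increasing on $D$, for each $t \in [0,1]$ the quantities
\[ L(t) := \sup\{m'_y(r) : r \in D,\ r < t\}, \qquad R(t) := \inf\{m'_y(r) : r \in D,\ r > t\} \]
exist in $I$, with the conventions $L(0) := 0$ and $R(1) := 1$.

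Next I would show that $L(t) = R(t)$ for every $t \in [0,1]$. If $L(t) < R(t)$, then by strict monotonicity no value of $m'_y$ can lie in the open interval $(L(t), R(t))$, contradicting the density of the image $m'_y(D) = B_y \cap (0,1)$ in $I$. This permits me to define $m_y : I \to I$ by $m_y(t) := L(t) = R(t)$; it genuinely extends $m'_y$, because at any $r \in D$ the continuity of $m'_y$ from the previous lemma gives $L(r) = R(r) = m'_y(r)$. For the boundary values, density of $B_y \cap (0,1)$ furnishes, for each $\epsilon > 0$, some $s \in B_y \cap (0,\epsilon)$, and writing $s = m'_y(r')$ with $r' \in D$, strict monotonicity forces $m'_y(r) < \epsilon$ for all $r \in D$ with $r < r'$; hence $m_y(0) = 0$, and the symmetric argument gives $m_y(1) = 1$.

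Strict monotonicity of $m_y$ on all of $I$ then follows from the density of $D$: given $t_1 < t_2$ in $I$, pick $r_1, r_2 \in D$ with $t_1 < r_1 < r_2 < t_2$ (with the obvious adjustments if $t_1 = 0$ or $t_2 = 1$), so that $m_y(t_1) \le m'_y(r_1) < m'_y(r_2) \le m_y(t_2)$. Continuity of $m_y$ is then immediate: a jump of a strictly increasing map $I \to I$ would produce an open subinterval of $I$ disjoint from the image, but the image contains the dense set $B_y \cap (0,1)$. Combined with $m_y(0) = 0$ and $m_y(1) = 1$, the intermediate value theorem forces $m_y$ to be a bijection of $I$ onto itself. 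Uniqueness is automatic, since any continuous extension of $m'_y$ must agree with $m_y$ on the dense set $D$ and hence everywhere on $I$. The only delicate point is that the density of the \emph{image} (and not just of the domain) is what rules out internal jumps in $m_y$, but this is precisely what hypothesis (ii) on $B$ provides.
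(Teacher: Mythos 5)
Your proposal is correct and follows essentially the same route as the paper: both extend $m'_y$ by monotone one-sided limits over the dense set $A_{\mu(y)}\cap(0,1)$, get strict monotonicity from the density of $A_{\mu(y)}$, and use the density of $B_y$ in $I$ to force left and right limits to agree (your identity $L(t)=R(t)$ is exactly the paper's continuity step, repackaged via sup/inf instead of sequences). The one assertion you use without justification is that $m'_y$ maps \emph{onto} $B_y\cap(0,1)$ (so that the image is dense in $I$); this is true --- given $b\in B_y\cap(0,1)$, write $b=\varphi(f)(y)$ by surjectivity of $\varphi$ and note that $f(\mu(y))\notin\{0,1\}$ because $y\in Y_0$ and $\varphi(0)=0$, $\varphi(1)=1$ --- but it deserves a line, since the whole no-jump argument rests on it.
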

\begin{proof}
We note that for each $s\in (0,1)$ there exists a strictly increasing
sequence $\{s_n\}$ in $A_{\mu(y)}\cap (0,1)$ converging to $s$.
Since $\{m'_y(s_n)\}$ is an increasing sequence in $I$ it follows
that  $\{m'_y(s_n)\}$ converges to a point in $I$. We put
$m_y(s)=\lim m'_y(s_n)$. We should note that the definition of
$m_y(s)$ is independent of the strictly increasing sequence
$\{s_n\}$ converging to $s$.
Indeed assume that  $\{t_n\}$ is also a  strictly increasing sequence in  $A_{\mu(y)}\cap (0,1)$ which converges to $s \in (0,1)$ and assume on the contrary that  $\lim m'_y(s_n) < \lim m'_y(t_n) $ . Hence there exists $N_0 \in \mathbb{N}$ such that  $m'_{y}(s_n) < m'_y(t_{N_0})$ for all $n\in \mathbb{N}$. Since $ m'_y$ is strictly increasing we get $s_n < t_{N_0}$ for all $n \in \mathbb{N} $. Thus $\lim s_{n} \le t_{N_0} < s $ which is a contradiction.

We put $m_y(0)=0$ and $m_y(1)=1$. Then $m_y$ is an extension of $m'_y$ to $I$. An easy verification shows that $m_y$ is increasing.  Now we prove that $m_y:I \longrightarrow I$ is strictly increasing.
Assume that $s,t\in (0,1)$ and $s<t$.  Being  $A_{\mu(y)}$ dense in $I$ there are functions $ f,g \in A$ such that  $ s< f(\mu(y)) <  g(\mu(y)) < t$. Since $ y \in Y_0 $ it follows from the definition of $m_y$ that

\[m_y(s) \leq m_y(f(\mu(y))) < m_y(g(\mu(y))) \leq m_y(t) \] that is $m_y(s)<m_y(t)$. Hence $m_y$ is  strictly increasing on $(0,1)$. It is obvious that $m_y(s)>0$ for all $s\in (0,1)$.  We note that $m_y(s)<1$ for all $s\in (0,1)$. Indeed, if $m_y(s)=1$ for some $s\in (0,1)$, then  choosing $f\in A$ with $s<f(\mu(y))<1$ we have $\varphi(f)(y)<1$, since $y\in Y_0$. On the other hand,
\[\varphi(f)(y)=m_y(f(\mu(y)))\ge m_y(s)=1,\]
a contradiction.  Hence $m_y:I \longrightarrow I$ is strictly increasing.

We shall show that $m_y$ is continuous at each point $s\in I$.  First assume that $s\in (0,1)$ is given. Let $ \{s_n\}$ and  $\{t_n\}$ be strictly increasing, respectively strictly decreasing sequences in $(0,1)$  converging to $s$.  Since $m_y$ is strictly increasing it suffices to show that
$\lim m_y(s_n) = \lim m_y(t_n)$.
Assume on the contrary that $\lim m_y(s_n) < \lim m_y(t_n)$. Since $B_y$ is dense in $I$, there exist
$g,h\in A$ such that:
\[ \lim m_y(s_n) < \varphi(g)(y) < \varphi(h)(y)
< \lim m_y(t_n). \]
Using the density of $A_{\mu(y)}$ we can choose a strictly increasing sequence $\{s'_n\}$ and a strictly decreasing sequence $\{t'_n\}$ in  $A_{\mu(y)}\cap (0,1)$ converging  to $s$ such that for all $k\in \BN$
\[ m_y( s'_k) < m_y( s_k) <  m_y( t_k) < m_y( t'_k).\]
Thus for each $k\in \BN$ we have
\[  m_y(s'_k) < \varphi(g)(y) < \varphi(h)(y)
< m_y(t'_k). \]
Since $y \in Y_0$ and $m_y$ is strictly increasing we conclude that for each $k\in \BN$
\[s'_k < g(\mu(y)) < h(\mu(y)) < t'_k,\]
which is impossible, since  $\lim s'_k =s=\lim t'_k $. Hence $m_y$ is continuous on $(0,1)$.   By a similar argument we can show that $m_y$ is continuous at points 0 and 1.

Finally,  using the mean value theorem, we deduce that $m_y$ is surjective.
 \end{proof}

The above lemma completes the proof of Theorem \ref{t2}. $\Box$

We note that, in Theorem \ref{t2} if $\varphi$ is assumed to be continuous,  then by Lemma \ref{p1} and Proposition \ref{p2}, $I_{\varphi^{-1}} = X$, that is $Y_0=I_\varphi$. Hence it follows from the theorem  that  there exist a continuous bijection $m_y:I \longrightarrow I$ such that
$\varphi(f)(y) = m_y(f(\mu(y)) )$ holds for all $f\in A$ and $y \in I_\varphi$. This gives the next corollary.

 \begin{cor}\label{c2}
Let $X$ and $Y$ be compact Hausdorff spaces, $A$ and $B$ be sublattices of $C(X,I)$ and $C(Y,I)$, respectively, which are closed under multiplication and  have Urysohn's
 property. Assume, furthermore, that the evaluations of $A$ and $B$ are dense in $I$. Then for any lattice isomorphism $\varphi:A \longrightarrow B $, the following statements are equivalent:

{\rm (i)} $\varphi$ is a homeomorphism.

{\rm (ii)} $\varphi$ is strictly increasing in both directions.

{\rm (iii)} There are a homeomorphism $ \mu : Y \longrightarrow X $
and a family $\{m_y\}_{y\in Y}$ of strictly increasing continuous
bijections on $I$ such that $\varphi(f)(y) = m_{y}( f(\mu(y)))$,
for all $y \in Y$ and all $f \in A$.
\end{cor}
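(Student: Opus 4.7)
The plan is to prove the cycle (i) $\Rightarrow$ (ii) $\Rightarrow$ (iii) $\Rightarrow$ (i).

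For (i) $\Rightarrow$ (ii), since $\varphi$ is a homeomorphism with respect to the uniform convergence topology, both $\varphi$ and $\varphi^{-1}$ are continuous; applying Proposition \ref{p2} to $\varphi$ yields that $\varphi^{-1}$ is strictly increasing, and applying the same proposition to $\varphi^{-1}$ yields that $\varphi=(\varphi^{-1})^{-1}$ is strictly increasing.

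For (ii) $\Rightarrow$ (iii), Lemma \ref{p1} applied to $\varphi$ and to $\varphi^{-1}$ gives $I_\varphi = Y$ and $I_{\varphi^{-1}} = X$, hence $Y_0 = I_\varphi \cap \mu^{-1}(I_{\varphi^{-1}}) = Y$, so the representation furnished by Theorem \ref{t2} is valid for every $y\in Y$.

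For (iii) $\Rightarrow$ (i), the plan is to establish the joint continuity of the map $m:Y\times I\longrightarrow I$ defined by $m(y,t)=m_y(t)$; once this is known, compactness of $Y\times I$ makes $m$ uniformly continuous, hence the family $\{m_y\}_{y\in Y}$ is equicontinuous in $t$ uniformly in $y$, which immediately forces $\|\varphi(f_n)-\varphi(f)\|_\infty\to 0$ whenever $\|f_n-f\|_\infty\to 0$. To prove joint continuity at $(y_0,t_0)$ with $t_0\in(0,1)$, I would take $(y_n,t_n)\to(y_0,t_0)$, fix a small $\epsilon>0$, and use the density of $A_{\mu(y_0)}$ in $I$ to select $g^-,g^+\in A$ with
\[
t_0-2\epsilon < g^-(\mu(y_0)) < t_0-\epsilon \quad\text{and}\quad t_0+\epsilon < g^+(\mu(y_0)) < t_0+2\epsilon.
\]
By continuity of $g^\pm\circ\mu$, for large $n$ the sandwich $g^-(\mu(y_n)) < t_n < g^+(\mu(y_n))$ holds, so strict monotonicity of $m_{y_n}$ gives $\varphi(g^-)(y_n) < m_{y_n}(t_n) < \varphi(g^+)(y_n)$. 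Passing to the limit in $n$ using continuity of $\varphi(g^\pm)\in B$ on $Y$, and then letting $\epsilon\to 0$ using continuity of the single function $m_{y_0}$ at $t_0$, forces $m_{y_n}(t_n)\to m_{y_0}(t_0)$. The endpoints $t_0\in\{0,1\}$ are handled by an obvious one-sided version of the same squeeze. Continuity of $\varphi^{-1}$ is then obtained by the identical argument applied to the representation $\varphi^{-1}(h)(x)=m_{\mu^{-1}(x)}^{-1}(h(\mu^{-1}(x)))$, whose constituent bijections $m_{\mu^{-1}(x)}^{-1}$ are again strictly increasing continuous bijections of $I$.

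The main obstacle is precisely this last implication: strict monotonicity and pointwise continuity of each $m_y$ individually do not a priori yield any equicontinuity of the family. The key ingredient is that the density of $A_{\mu(y_0)}$ in $I$ provides elements $g^\pm\in A$ whose images under $\varphi$ are already known to be continuous functions on the whole of $Y$, and this is what transfers pointwise information about the $m_y$'s into the required joint continuity of $m$.
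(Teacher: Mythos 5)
Your cycle (i) $\Rightarrow$ (ii) $\Rightarrow$ (iii) coincides with the paper's route: the remark preceding the corollary applies Proposition \ref{p2} to $\varphi$ and to $\varphi^{-1}$ to obtain strict monotonicity in both directions, and Lemma \ref{p1} then gives $I_\varphi=Y$ and $I_{\varphi^{-1}}=X$, so that $Y_0=Y$ and Theorem \ref{t2} furnishes the representation at every point. Where you genuinely go beyond the paper is the implication (iii) $\Rightarrow$ (i), for which the paper offers no argument at all. Your squeeze is the right mechanism and is essentially correct: sandwiching $t$ between values $g^{-}(\mu(y))<t<g^{+}(\mu(y))$ of functions whose images $\varphi(g^{\pm})$ are already continuous on $Y$ converts the pointwise continuity and strict monotonicity of each individual $m_y$ into joint continuity of $(y,t)\mapsto m_y(t)$, compactness of $Y\times I$ upgrades this to a uniform modulus in $t$, and that yields norm-continuity of $\varphi$; the inverse representation $\varphi^{-1}(h)(x)=m^{-1}_{\mu^{-1}(x)}\bigl(h(\mu^{-1}(x))\bigr)$ together with the density of the evaluations of $B$ handles $\varphi^{-1}$ symmetrically.

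One caveat you should repair: you phrase both the joint-continuity test and the extraction of the uniform modulus in terms of sequences $(y_n,t_n)\to(y_0,t_0)$. Since $Y$ is an arbitrary compact Hausdorff space --- and avoiding first countability is precisely the point of this paper --- sequential continuity does not imply continuity, and compactness does not give convergent subsequences. The same squeeze runs verbatim for nets, and the uniform modulus is obtained by taking a convergent subnet in the compact space $Y\times I\times I$ of a putative family violating it; so this is a presentational fix rather than a structural gap, but as written the step is not justified.
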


Next theorem gives an explicit description of lattice isomorphisms between the lattices of interval-valued Lipschitz functions. In this
context the given lattice isomorphism is automatically continuous with respect to the uniform convergence topology.

\begin{theorem}
Let $(X,d)$ and $(Y,d^{\prime})$ be compact metric spaces and
$\varphi : \Lip(X,I) \longrightarrow \Lip(Y,I)$ be a lattice
isomorphism. Then there exist a homeomorphism $ \mu : Y
\longrightarrow X $ and a family $\{m_y\}_{y\in Y}$ of continuous
increasing bijections on $I$  such that $\varphi(f)(y) =
m_{y}(f(\mu(y)))$, for all $f \in \Lip(X,I)$ and $y\in Y$.
\end{theorem}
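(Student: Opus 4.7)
My approach is to bootstrap from Theorem \ref{t2} and Corollary \ref{c2}. First I verify that the sublattices $A = \Lip(X,I)$ and $B = \Lip(Y,I)$ meet all of the standing hypotheses: both have Urysohn's property (as recorded in the introduction via the functions $x\mapsto \min(d(x,F)/d(F,G),1)$), both are closed under multiplication, and both contain the constant functions, so their evaluations on $X$ and $Y$ equal $I$. Theorem \ref{t2} therefore provides a homeomorphism $\mu : Y \longrightarrow X$ and, on a dense $G_\delta$ subset $Y_0 \subseteq Y$, a family $\{m_y\}_{y\in Y_0}$ of strictly increasing continuous bijections of $I$ with $\varphi(f)(y) = m_y(f(\mu(y)))$ for every $f\in A$ and $y\in Y_0$.

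To promote this representation from $Y_0$ to the whole of $Y$, the remark following Corollary \ref{c2} reduces the task to verifying that both $\varphi$ and $\varphi^{-1}$ are continuous with respect to the uniform convergence topology: Proposition \ref{p2} then forces $I_{\varphi^{-1}} = X$ and $I_\varphi = Y$, so $Y_0 = I_\varphi \cap \mu^{-1}(I_{\varphi^{-1}}) = Y$. Thus the theorem reduces to establishing \emph{automatic continuity} of $\varphi$ on $\Lip(X,I)$, which is where the extra Lipschitz structure (beyond the lattice and multiplicative operations used in Theorem \ref{t2}) must enter essentially.

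For the automatic continuity my plan is to exploit that, for any $f \in \Lip(X,I)$ and $\epsilon > 0$, the clipped translates $f_\epsilon^+ := (f+\epsilon)\wedge 1$ and $f_\epsilon^- := (f-\epsilon)\vee 0$ still lie in $\Lip(X,I)$, sandwich $f$, and differ from $f$ in sup-norm by at most $\epsilon$. Any $g \in \Lip(X,I)$ within sup-distance $\epsilon$ of $f$ then satisfies $f_\epsilon^- \le g \le f_\epsilon^+$, so monotonicity of $\varphi$ yields $|\varphi(g) - \varphi(f)| \le \varphi(f_\epsilon^+) - \varphi(f_\epsilon^-)$ pointwise on $Y$. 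Continuity of $\varphi$ at $f$ therefore reduces to showing that $\varphi(f_\epsilon^+) - \varphi(f_\epsilon^-)$ tends to $0$ uniformly on $Y$ as $\epsilon\to 0$; on the dense set $Y_0$ this goes to $0$ pointwise simply by continuity of each $m_y$.

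The main obstacle is upgrading this pointwise vanishing on $Y_0$ to uniform vanishing on all of $Y$. The family $\{\varphi(f_\epsilon^+) - \varphi(f_\epsilon^-)\}_{\epsilon > 0}$ is monotone in $\epsilon$ and consists of nonnegative continuous (in fact Lipschitz) functions on the compact space $Y$, but a naive Dini argument stalls because the pointwise limit is not a priori continuous off $Y_0$. I expect to close the gap with a Baire-category and upper-semicontinuity argument, exploiting the Lipschitz regularity of $\varphi(f_\epsilon^{\pm})$, to rule out jumps of the limit at points of $Y\setminus Y_0$; a symmetric argument then yields the same for $\varphi^{-1}$. Once automatic continuity of both $\varphi$ and $\varphi^{-1}$ is in hand, Corollary \ref{c2} delivers the desired representation $\varphi(f)(y) = m_y(f(\mu(y)))$ for every $y\in Y$, with each $m_y$ a strictly increasing continuous bijection of $I$.
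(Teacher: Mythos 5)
Your reduction is sound as far as it goes: $\Lip(X,I)$ and $\Lip(Y,I)$ do satisfy all the standing hypotheses, Theorem \ref{t2} gives $\mu$ and the representation on a dense $G_\delta$ set $Y_0$, and by the remark preceding Corollary \ref{c2} (via Lemma \ref{p1} and Proposition \ref{p2}) it would indeed suffice to prove that $\varphi$ and $\varphi^{-1}$ are automatically continuous. The sandwich $\varphi(f_\epsilon^-)\le\varphi(g)\le\varphi(f_\epsilon^+)$ for $\|g-f\|_\infty\le\epsilon$ is also correct. But the proof has a genuine gap exactly where you flag it, and the gap is not closable by the tools you name. The functions $u_\epsilon:=\varphi(f_\epsilon^+)-\varphi(f_\epsilon^-)$ decrease to an upper semicontinuous limit $u\ge 0$ that vanishes on $Y_0$; an upper semicontinuous function can vanish on a dense $G_\delta$ and still be strictly positive at points of the complement (e.g.\ the indicator of a single point is u.s.c.), so Baire category plus upper semicontinuity cannot force $u\equiv 0$. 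Nor does ``Lipschitz regularity of $\varphi(f_\epsilon^{\pm})$'' help: a lattice isomorphism carries no quantitative control, so there is no bound on the Lipschitz constants of the images and hence no equicontinuity to feed into an Ascoli or Dini argument. Worse, showing $u(y)=0$ at an arbitrary $y\in Y\setminus Y_0$ is essentially equivalent to showing that $y$ behaves like a point of $Y_0$, i.e.\ it is the content of the theorem itself; the proposed route is close to circular.

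The idea that is actually needed, and that your proposal does not contain, exploits the Lipschitz structure of the \emph{domain} rather than of the images. Suppose $x_0=\mu(y_0)\notin I_{\varphi^{-1}}$, so there are $f,g$ with $f(\mu(y_0))=g(\mu(y_0))$ but $\varphi(f)(y_0)<\varphi(g)(y_0)$. One picks a rapidly converging sequence $x_n\to x_0$ with $x_n\in\mu(Y_0)$ (such a sequence exists because $Y_0$ is dense and $x_0$ cannot be isolated), defines $h$ on $\{x_n\}\cup\{x_0\}$ to agree with $f$ at even indices and with $g$ at odd indices, checks that the geometric decay $d(x_i,x_0)\le d(x_{i-1},x_0)/2$ makes $h$ Lipschitz on this closed set, and extends it by the Lipschitz extension theorem to some $H\in\Lip(X,I)$. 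Since each $y_i=\mu^{-1}(x_i)$ lies in $Y_0$, one gets $\varphi(H)(y_i)=\varphi(f)(y_i)$ along even indices and $\varphi(H)(y_j)=\varphi(g)(y_j)$ along odd indices, and letting $i\to\infty$ forces $\varphi(f)(y_0)=\varphi(H)(y_0)=\varphi(g)(y_0)$, a contradiction. This oscillating-extension argument is the essential use of the metric/Lipschitz hypothesis, and without it (or a genuinely new substitute for your uniform-convergence step) the proof is incomplete.
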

\begin{proof}
Since $\Lip(X,I)$ and $\Lip(Y,I)$ are sublattices of $C(X,I)$ and $C(Y,I)$, having Urysohn's property, it follows from Theorem \ref{t2} that $\varphi$ induces a homeomorphism  $\mu : Y \longrightarrow X$. As $\Lip(X,I)$ and $\Lip(Y,I)$ are closed under the multiplication and contain the constants, it suffices, by the second part of Theorem \ref{t2},  to show that $Y_0=Y$, that is  $I_{\varphi^{-1}} = X$ and $I_\varphi=Y$. We prove the first equality, since the other one is proven in a similar manner.
 Assume on the contrary that there exists a point $x_0 \in X$ such that $x_0\notin  I_{\varphi^{-1}}$. Let $y_0 \in Y$ such that $ \mu(y_0) = x_0$. Since $x_0 \notin I_{\varphi^{-1}}$,  there exist functions
 $f,g \in \Lip(X,I)$ with  $f(\mu(y_0)) = g(\mu(y_0))$ such that $ \varphi(f)(y_0) < \varphi(g)(y_0)$. We should note
 that $x_0$ is not an isolated  point since by Lemma  \ref{c1}, $I_{\varphi^{-1}}$
 contains all isolated points of $X$.  Using the density of $Y_0$, we can choose a sequence $\{x_n\}$ in $\mu(Y_0)$ converging to $x_0$ such that for each $i\in \BN$, $0<d(x_i,x_0)\le \frac{d(x_{i-1},x_0)}{2}$. We put  $\alpha_i=d(x_i,x_0)$ for all $i\in \BN$.
 We note that for $i,j\in
\BN$ with $i<j$ we have
\begin{equation}\label{1}
d(x_{i}, x_{j}) \geq d(x_i, x_0)-d(x_0, x_j)  \geq \frac{\alpha_{i}}{2}
\end{equation}
Let $ h: \lbrace x_{n} \rbrace \cup \lbrace x \rbrace
\longrightarrow I $ be defined by
\begin{equation}
h(x_{i})=\left\{
\begin{array}{lll}
f(x_{i}),&  \qquad i\; \text{is even}\\
g(x_{i}),& \qquad i\;  \text{is odd}\\
f(x_{i}),&  \qquad {i =0}    \\
\end{array}
\right .
\end{equation}
We show that $h$ is a Lipschitz function on the closed subset
$\{x_n\}\cup \{x_0\}$ of $X$. To do this assume that $M=\max(L(f),
L(g))$ where $L(f)$ and $L(g)$ are Lipschitz constants of $f$ and
$g$, respectively. Clearly for the given pair $i,j\in \BN$, if both $i,j$ are either even  or odd we have $|h(x_{i})- h(x_{j})| \leq M
d(x_{i},x_{j})$. Hence we assume that $i$ is even and $j$ is odd.
Without loss of generality we assume that $i < j$. Then we have
$|h(x_{i}) - h(x_{j})|\leq L(f) d( x_{i}, x_{0} )  + L(g) d(x_{j}
, x_{0}) \leq M\,(\alpha_{i} + \alpha_{j}) \leq 2M \alpha_{i}$. By
(\ref{1}) we have $|h(x_{i}) - h(x_{j}) | \leq 2M \alpha_{i}
  \leq 4M d( x_{i} , x_{j} )$. This shows that $h$ is a Lipschitz function on $\{x_n\}\cup \{x_0\}$. Hence, by (\cite{Wea}, Theorem 1.5.6(a)),  we can extend $h$ to a Lipschitz function $H: X \longrightarrow I$ with the same  Lipschitz constant $L(h)$.
For each $i\in \BN$ we put $y_i=\mu^{-1}(x_i)$. Since  $\{y_i\}$ is a sequence in $Y_0$ and $H(\mu(y_i)=f(\mu(y_i))$ whenever $i$ is even, we conclude that
$\varphi(H)(y_i) = \varphi(f)(y_i)$ for even $i\in \BN$. Hence tending $i\to \infty$ we get $\varphi(H)(y_0)=\varphi(f)(y_0)$. Similarly we have
$\varphi(H)(y_j) = \varphi(f)(y_j)$ for all odd $j\in \BN$ and consequently $\varphi(H)(y_0)=\varphi(g)(y_0)$ which is impossible, since $\varphi(f)(y_0)\neq \varphi(g)(y_0)$.
\end{proof}

\end{document}